
\documentclass[12pt]{amsart} 
\usepackage{amsmath,amsthm,amsfonts,amssymb,latexsym}

\headheight=7pt
\textheight=574pt
\textwidth=432pt
\topmargin=14pt
\oddsidemargin=18pt
\evensidemargin=18pt

\begin{document}

\theoremstyle{plain}

\newtheorem{thm}{Theorem}[section]
\newtheorem{lem}[thm]{Lemma}
\newtheorem{pro}[thm]{Proposition}
\newtheorem{cor}[thm]{Corollary}
\newtheorem{que}[thm]{Question}
\theoremstyle{definition}\newtheorem{rem}[thm]{Remark}
\theoremstyle{definition}\newtheorem{rems}[thm]{Remarks}
\theoremstyle{definition}\newtheorem{defi}[thm]{Definition}
\theoremstyle{definition}\newtheorem{Question}[thm]{Question}
\newtheorem{con}[thm]{Conjecture}

\newtheorem*{thmA}{Theorem A}
\newtheorem*{thmB}{Theorem B}
\newtheorem*{thmC}{Theorem C}

\newtheorem*{thmAcl}{Main Theorem$^{*}$}
\newtheorem*{thmBcl}{Theorem B$^{*}$}

\newcommand{\Maxn}{\operatorname{Max_{\textbf{N}}}}
\newcommand{\Syl}{\operatorname{Syl}}
\newcommand{\dl}{\operatorname{dl}}
\newcommand{\Con}{\operatorname{Con}}
\newcommand{\cl}{\operatorname{cl}}
\newcommand{\Stab}{\operatorname{Stab}}
\newcommand{\Aut}{\operatorname{Aut}}
\newcommand{\Ker}{\operatorname{Ker}}
\newcommand{\fl}{\operatorname{fl}}
\newcommand{\Irr}{\operatorname{Irr}}
\newcommand{\IBr}{\operatorname{IBr}}
\newcommand{\SL}{\operatorname{SL}}
\newcommand{\NN}{\mathbb{N}}
\newcommand{\N}{\mathbf{N}}
\newcommand{\C}{\mathbf{C}}
\newcommand{\OO}{\mathbf{O}}
\newcommand{\F}{\mathbf{F}}
\newcommand{\FF}{\mathbb{F}}
\newcommand{\CF}{\mathcal{F}}
\newcommand{\CO}{\mathcal{O}}

\renewcommand{\labelenumi}{\upshape (\roman{enumi})}

\newcommand{\PSL}{\operatorname{PSL}}
\newcommand{\PSU}{\operatorname{PSU}}

\providecommand{\V}{\mathrm{V}}
\providecommand{\E}{\mathrm{E}}
\providecommand{\ir}{\mathrm{Irr_{rv}}}
\providecommand{\Irrr}{\mathrm{Irr_{rv}}}
\providecommand{\re}{\mathrm{Re}}

\def\Z{{\mathbb Z}}
\def\C{{\mathbb C}}
\def\Q{{\mathbb Q}}
\def\irr#1{{\rm Irr}(#1)}
\def\ibr#1{{\rm IBr}(#1)}
\def\irrv#1{{\rm Irr}_{\rm rv}(#1)}
\def \c#1{{\cal #1}}
\def\cent#1#2{{\bf C}_{#1}(#2)}
\def\syl#1#2{{\rm Syl}_#1(#2)}
\def\nor{\triangleleft\,}
\def\oh#1#2{{\bf O}_{#1}(#2)}
\def\Oh#1#2{{\bf O}^{#1}(#2)}
\def\zent#1{{\bf Z}(#1)}
\def\det{\mathrm{det}}
\def\ker#1{{\rm ker}(#1)}
\def\norm#1#2{{\bf N}_{#1}(#2)}
\def\alt#1{{\rm Alt}(#1)}
\def\iitem#1{\goodbreak\par\noindent{\bf #1}}
   \def \mod#1{\, {\rm mod} \, #1 \, }
\def\sbs{\subseteq}

\def\gc{{\bf GC}}
\def\ngc{{non-{\bf GC}}}
\def\ngcs{{non-{\bf GC}$^*$}}
\newcommand{\notd}{{\!\not{|}}}
\def\OG{{\mathcal{O}G}}
\def\foc{\mathfrak{foc}}
\def\hyp{\mathfrak{hyp}}

\newcommand{\Out}{{\mathrm {Out}}}
\newcommand{\Br}{{\mathrm {Br}}}
\newcommand{\Mult}{{\mathrm {Mult}}}
\newcommand{\Inn}{{\mathrm {Inn}}}
\newcommand{\IBR}{{\mathrm {IBr}}}
\newcommand{\IBRL}{{\mathrm {IBr}}_{\ell}}
\newcommand{\IBRP}{{\mathrm {IBr}}_{p}}
\newcommand{\ord}{{\mathrm {ord}}}
\def\id{\mathop{\mathrm{ id}}\nolimits}
\renewcommand{\Im}{{\mathrm {Im}}}
\newcommand{\Ind}{{\mathrm {Ind}}}
\newcommand{\diag}{{\mathrm {diag}}}
\newcommand{\soc}{{\mathrm {soc}}}
\newcommand{\End}{{\mathrm {End}}}
\newcommand{\sol}{{\mathrm {sol}}}
\newcommand{\Hom}{{\mathrm {Hom}}}
\newcommand{\Mor}{{\mathrm {Mor}}}
\newcommand{\Mat}{{\mathrm {Mat}}}
\def\rank{\mathop{\mathrm{ rank}}\nolimits}
\newcommand{\Tr}{{\mathrm {Tr}}}
\newcommand{\tr}{{\mathrm {tr}}}
\newcommand{\Gal}{{\it Gal}}
\newcommand{\Spec}{{\mathrm {Spec}}}
\newcommand{\ad}{{\mathrm {ad}}}
\newcommand{\Sym}{{\mathrm {Sym}}}
\newcommand{\Char}{{\mathrm {char}}}
\newcommand{\pr}{{\mathrm {pr}}}
\newcommand{\rad}{{\mathrm {rad}}}
\newcommand{\abel}{{\mathrm {abel}}}
\newcommand{\codim}{{\mathrm {codim}}}
\newcommand{\ind}{{\mathrm {ind}}}
\newcommand{\Res}{{\mathrm {Res}}}
\newcommand{\Ann}{{\mathrm {Ann}}}
\newcommand{\Ext}{{\mathrm {Ext}}}
\newcommand{\Alt}{{\mathrm {Alt}}}
\newcommand{\AAA}{{\sf A}}
\newcommand{\SSS}{{\sf S}}
\newcommand{\CC}{{\mathbb C}}
\newcommand{\CB}{{\mathbf C}}
\newcommand{\RR}{{\mathbb R}}
\newcommand{\QQ}{{\mathbb Q}}
\newcommand{\ZZ}{{\mathbb Z}}
\newcommand{\NB}{{\mathbf N}}
\newcommand{\ZB}{{\mathbf Z}}
\newcommand{\EE}{{\mathbb E}}
\newcommand{\PP}{{\mathbb P}}
\newcommand{\GC}{{\mathcal G}}
\newcommand{\HC}{{\mathcal H}}
\newcommand{\GA}{{\mathfrak G}}
\newcommand{\TC}{{\mathcal T}}
\newcommand{\SC}{{\mathcal S}}
\newcommand{\RC}{{\mathcal R}}
\newcommand{\bG}{{ \bf G}}
\newcommand\bH{{\bf H}}
\newcommand{\bL} {{\bf L}}
\newcommand{\bM}{{\bf M}}
\newcommand{\bT}{{\bf T}}
\newcommand{\GCD}{\GC^{*}}
\newcommand{\TCD}{\TC^{*}}
\newcommand{\FD}{F^{*}}
\newcommand{\GD}{G^{*}}
\newcommand{\HD}{H^{*}}
\newcommand{\GCF}{\GC^{F}}
\newcommand{\TCF}{\TC^{F}}
\newcommand{\PCF}{\PC^{F}}
\newcommand{\GCDF}{(\GC^{*})^{F^{*}}}
\newcommand{\RGTT}{R^{\GC}_{\TC}(\theta)}
\newcommand{\RGTA}{R^{\GC}_{\TC}(1)}
\newcommand{\Om}{\Omega}
\newcommand{\eps}{\epsilon}
\newcommand{\al}{\alpha}
\newcommand{\chis}{\chi_{s}}
\newcommand{\sigmad}{\sigma^{*}}
\newcommand{\PA}{\boldsymbol{\alpha}}
\newcommand{\gam}{\gamma}
\newcommand{\lam}{\lambda}
\newcommand{\la}{\langle}
\newcommand{\ra}{\rangle}
\newcommand{\hs}{\hat{s}}
\newcommand{\htt}{\hat{t}}
\newcommand{\tn}{\hspace{0.5mm}^{t}\hspace*{-0.2mm}}
\newcommand{\ta}{\hspace{0.5mm}^{2}\hspace*{-0.2mm}}
\newcommand{\tb}{\hspace{0.5mm}^{3}\hspace*{-0.2mm}}
\def\skipa{\vspace{-1.5mm} & \vspace{-1.5mm} & \vspace{-1.5mm}\\}
\newcommand{\tw}[1]{{}^#1\!}
\renewcommand{\mod}{\bmod \,}

\marginparsep-0.5cm

\renewcommand{\thefootnote}{\fnsymbol{footnote}}
\footnotesep6.5pt

\title[Characterisation of nilpotent blocks]
{A characterisation of nilpotent blocks}

\author[Kessar]{Radha Kessar}
\address{School of Engineering and Mathematical Sciences, 
City University London EC1V 0HB, Great Britain}
\email{radha.kessar.1@city.ac.uk}

\author[Linckelmann]{Markus Linckelmann}
\address{School of Engineering and Mathematical Sciences, 
City University  London EC1V 0HB, Great Britain}
\email{markus.linckelmann.1@city.ac.uk}

\author[Navarro]{Gabriel Navarro}
\address{Departament d'\`Algebra, Universitat de Val\`encia,
 Dr. Moliner 50, 46100 Burjassot, Spain.}
\email{gabriel.navarro@uv.es}


\keywords{nilpotent block, height zero, hyperfocal subalgebra}

\subjclass[2010]{Primary 20C20; Secondary}

\begin{abstract} 
Let $B$ be a $p$-block of a finite group, and set $m=$ $\sum \chi(1)^2$, 
the sum taken over all height zero characters of $B$. Motivated by a 
result of M. Isaacs characterising $p$-nilpotent finite groups in terms 
of character degrees, we show that $B$ is nilpotent if and only if the 
exact power of $p$ dividing $m$ is equal to the $p$-part of 
$|G:P|^2|P:R|$, where $P$ is a defect group of $B$ and where $R$ is the 
focal subgroup of $P$ with respect to a fusion system $\CF$ of $B$ on 
$P$. The proof involves the hyperfocal subalgebra $D$ of a source 
algebra of $B$. We conjecture that all ordinary irreducible characters 
of $D$ have degree prime to $p$ if and only if the $\CF$-hyperfocal 
subgroup of $P$ is abelian.
\end{abstract}

\maketitle

\section{Introduction}

Let $p$ be a prime number and let $(K, \CO, k)$ be a $p$-modular system. 
We assume that $k$ is algebraically  closed and that $K$ is a splitting 
field for all finite groups considered in the paper. Let  $G$ be a finite 
group and $B$ a block algebra of $\CO G$.  Let $\CF=\CF_{(P,e_P)}(G, B)$ 
be the fusion system of $ B$ with  respect to a maximal $B$-Brauer pair  
$(P, e_P)$. The focal subgroup of $\CF$ is the subgroup $\foc(\CF)$ 
of $P$ generated by the commutators $[\Aut_\CF(Q),Q]$, where $Q$ runs over
the subgroups of $P$ (see  \cite[I, Def.~ 7.1]{AKO}). Let $\Irr(B)$ denote 
the set of $K$-valued irreducible characters of $B$ and  let $\Irr_{0}(B)$ 
denote the subset of height zero characters. Let $ i \in B^P$ be a source 
idempotent of $B$, and let $S$ be a Sylow $p$-subgroup of $G$ containing 
$P$. 

\begin{thm} \label{degnilp}   
With the notation above, the following are equivalent.
\begin{enumerate}   
\item  $|\sum_{\chi  \in \Irr_{0} (B) } \chi(1) ^2|_p   =  
|S:P|^2 | P :  \foc(\CF)    | $.

\item   $  |\sum_{\chi  \in \Irr_{0} (B) } \chi(i) ^2|_p   =  
| P :    \foc(\CF)  |$.

\item $|\Irr_0(B)|=|P:\foc(\CF)|$.

\item $B$ is nilpotent.
\end{enumerate}
\end{thm}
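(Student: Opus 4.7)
The plan is to first handle $(\mathrm{iv}) \Rightarrow (\mathrm{i}), (\mathrm{ii}), (\mathrm{iii})$ via Puig's nilpotent block theorem, then to note the equivalences among $(\mathrm{i}), (\mathrm{ii}), (\mathrm{iii})$ by $p$-adic bookkeeping at the source idempotent $i$, and finally to prove the converse $(\mathrm{iii}) \Rightarrow (\mathrm{iv})$ using the hyperfocal subalgebra $D$ of the source algebra $A = iBi$.

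For $(\mathrm{iv})$ implies the rest, I would invoke Puig's structure theorem: a nilpotent $B$ has $A \cong \CO P$ as interior $P$-algebras, its fusion system is $\CF_P(P)$ so $\foc(\CF) = [P,P]$, and the induced Morita equivalence puts $\Irr_0(B)$ in bijection with the linear characters of $P$. Hence $|\Irr_0(B)| = |P:P'| = |P:\foc(\CF)|$, and the explicit forms in (i) and (ii) follow from the corresponding values of $\chi(1)$ and $\chi(i)$ on these height-zero characters.

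The equivalences among (i), (ii), (iii) for a general block rest on two facts. First, $\chi(i) \in \CO$ lies in $p^{h(\chi)}\CO$ where $h(\chi)$ is the height, and for height-zero $\chi$ one has $\chi(1) = |S:P| \cdot \chi(i) \cdot u$ with $u$ a $p'$-unit, so the height-zero contributions dominate the $p$-parts of both $\sum \chi(1)^2$ and $\sum \chi(i)^2$ while the non-height-zero ones sit in strictly larger $p$-adic ideals; this yields $(\mathrm{i}) \Leftrightarrow (\mathrm{ii})$. Second, the lower bound $|\Irr_0(B)| \geq |P:\foc(\CF)|$ (from lifting the $|P:\foc(\CF)|$ many $\CF$-stable linear characters of $P$ to distinct height-zero characters of $B$), combined with a congruence $\sum_{\chi \in \Irr_0(B)} \chi(i)^2 \equiv |\Irr_0(B)| \pmod{p}$, gives $(\mathrm{ii}) \Leftrightarrow (\mathrm{iii})$.

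The main direction is $(\mathrm{iii}) \Rightarrow (\mathrm{iv})$. By Puig's hyperfocal theorem, $A$ contains an interior $\hyp(\CF)$-subalgebra $D$ such that $A$ decomposes as a $P/\hyp(\CF)$-graded crossed product over $D$. Height-zero characters of $A$ then parametrise as pairs consisting of a $1$-dimensional character of $P/\foc(\CF)$ and a height-zero character of $D$, and $(\mathrm{iii})$ forces $D$ to admit only the trivial such character. The main obstacle is to deduce from this that $\hyp(\CF) = 1$, equivalently $D = \CO$; this is the block-theoretic analogue of the Isaacs $p$-nilpotency criterion that motivates the paper, and presumably combines the intrinsic $\hyp(\CF)$-algebra structure of $D$ with an inductive reduction via the normaliser of a maximal $B$-Brauer pair. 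Once $\hyp(\CF) = 1$ is established, Puig's criterion yields that $B$ is nilpotent.
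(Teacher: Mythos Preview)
Your overall plan and the use of the hyperfocal subalgebra $D$ are right, but the decisive step is left undone. You reduce the hard direction to the situation where $D$ has essentially a unique $P$-stable irreducible character of $p'$-degree, and then say that deducing $\hyp(\CF)=1$ ``presumably combines \ldots\ with an inductive reduction via the normaliser of a maximal $B$-Brauer pair''. No induction and no normaliser argument is needed; the missing step is a two-line count. Writing $\mathcal V$ for the set of $P$-stable $W\in\Irr(D)$ of $p'$-degree, Clifford theory together with the extendibility of such $W$ (which in turn rests on the transfer result of Diaz--Glesser--Park--Stancu) gives
\[
\sum_{\chi\in\Irr_0(B)}\chi(i)^2\;=\;|P:\foc(\CF)|\sum_{W\in\mathcal V}\dim_K(W)^2.
\]
The point you are missing is that
\[
\sum_{W\in\mathcal V}\dim_K(W)^2\ \equiv\ \sum_{W\in\Irr(D)}\dim_K(W)^2\ =\ \dim_K(\hat D)\pmod{p},
\]
because any $W\notin\mathcal V$ either has $p\mid\dim_K(W)$ or sits in a $P$-orbit of length divisible by $p$. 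Hence (ii) forces the $\CO$-rank of $D$ to be prime to $p$. But $D$ is a summand of $A$ as a left $\CO Q$-module, so it is $\CO Q$-projective and $|Q|$ divides this rank; therefore $Q=\hyp(\CF)=1$ and $B$ is nilpotent. That is the whole argument.

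Two further corrections. The congruence $\sum_{\chi\in\Irr_0(B)}\chi(i)^2\equiv |\Irr_0(B)|\pmod p$ you invoke for (ii)$\Leftrightarrow$(iii) is not valid: for height-zero $\chi$ the integer $\chi(i)$ is prime to $p$, but there is no reason for $\chi(i)^2\equiv 1\pmod p$, which is what your claim amounts to after dividing by $|P:\foc(\CF)|$. The paper does not prove (ii)$\Rightarrow$(iii) directly; it goes via (iv). The easy implication is (iii)$\Rightarrow$(i): under (iii) the free $P/\foc(\CF)$-action on $\Irr_0(B)$ given by the $*$-construction has a single orbit with constant $\chi(1)$, so the sum is $|P:\foc(\CF)|\cdot\chi(1)^2$. (For (i)$\Leftrightarrow$(ii) your idea is right, but the unit $u$ in $\chi(1)=|S:P|\,\chi(i)\,u$ depends on $\chi$; one needs that all these units are congruent modulo $J(\CO)$ to a fixed $\lambda\in\CO^\times$, which comes from evaluating the central character at $\Tr_P^G(i)$.) Finally, for nilpotent $B$ the source algebra is $A\cong S\otimes_\CO\CO P$ with $S$ a matrix algebra over $\CO$, not $A\cong\CO P$; correspondingly $\hyp(\CF)=1$ forces $D$ to be such a matrix algebra, not $D=\CO$.
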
 

Theorem  \ref{degnilp} may be seen as  a block-theoretic analogue of   
Isaacs' result  \cite [Lemma~ 4]{Isa86}  characterizing $p$-nilpotent 
groups via character degrees. The fact that statement (iv) in this 
theorem implies any of (i), (ii), (iii) is an immediate consequence of 
Puig's structure theorem \cite[Theorem 1.6]{Punil} for source algebras 
of nilpotent blocks. It is known that in general the quantity on the 
right hand side of (i) or (ii) always divides the quantity on the left 
hand side  of (i) or (ii), respectively. This is a consequence of the 
free action of $P/ \foc(\CF)$ on the set of height zero characters via 
the $*$-construction (see \cite[Introduction]{Pui00}, \cite{Rob08}).  
The proof of Theorem  \ref{degnilp} relies on the Clifford theoretic 
relationship between the representation theory of the source algebra 
$iBi$ and its hyperfocal subalgebra $D$ (a concept due to Puig; see 
\cite[Theorem 1.8] {Pui00}).  This relationship suggests the following 
`hyperfocal height zero' version of Brauer's height zero conjecture.
Denote by $\Irr (D)$ a set of representatives of isomorphism classes 
of simple $ K \otimes_{\mathcal O}  D$-modules and by $\Irr_0 (D)$ the 
subset corresponding to simple modules of $p'$-degree.  Let $\hyp(\CF)$ 
denote the hyperfocal subgroup of $\CF$. The focal subgroup $\foc(\CF)$ 
is the product  $\hyp(\CF) P'$ of the  hyperfocal subgroup of $\CF$ and 
the derived subgroup $P'$ of $P$; see \cite[Lemma 7.2]{AKO}.  

\begin{con} \label{hyperheightzero}    
With the notation above, assume that $K$ is a splitting field of $D$. 
Then, $\Irr(D) =\Irr_0(D)$ if and only if $ \hyp(\CF)$ is abelian.
\end{con}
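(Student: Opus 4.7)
My plan is to regard Conjecture \ref{hyperheightzero} as a hyperfocal version of the Brauer height zero conjecture (now a theorem), transposed from the source algebra $iBi$ to the hyperfocal subalgebra $D$. The key tool is Puig's structure theorem \cite{Pui00}, which gives an isomorphism $iBi \cong \OO P \otimes_{\OO \hyp(\CF)} D$ of $P$-interior algebras and lets one think of $D$ as ``playing the role'' of a source algebra with defect group $\hyp(\CF)$. Combined with the Clifford-theoretic correspondence between $\Irr_{0}(B)$ and $\Irr(D)$ used in the proof of Theorem \ref{degnilp}, the conjecture becomes the statement that the height zero phenomenon for $B$ is correctly detected by $D$.

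For the ``if'' direction, assuming $\hyp(\CF)$ is abelian, the plan is to exploit Puig's construction of $D$ from $p'$-automorphism data together with abelianness of $\hyp(\CF)$ to show that $KD$ has only $p'$-degree irreducibles. In the nilpotent case $\hyp(\CF) = 1$ one has $D = \OO$ and the claim is trivial; more generally, the strategy is to identify $KD$ (up to Morita equivalence) with a commutative algebra, or at worst with a twisted group algebra of $\hyp(\CF)$ whose cocycle class can be shown to be trivial over $K$. For the ``only if'' direction, I would argue by contradiction: assume $\Irr(D) = \Irr_{0}(D)$ but $\hyp(\CF)$ is nonabelian, and try to construct a finite group $H$ with a block $b$ of $\OO H$ having defect group $\hyp(\CF)$ whose source algebra is Morita equivalent to $D$. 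Brauer's height zero conjecture applied to $b$ then produces a character of $b$ of positive height, which transfers under the Morita equivalence to a simple $KD$-module of $p$-divisible degree, contradicting the hypothesis.

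The main obstacle is the ``only if'' direction. Realising a given interior $\hyp(\CF)$-algebra as (Morita equivalent to) the source algebra of a block of a finite group is tied to the long-standing problem of realising fusion systems by finite groups, and for arbitrary $D$ it may simply be unavailable. A cleaner strategy would be to prove a direct analogue of the height zero conjecture for hyperfocal subalgebras, working entirely inside Puig's interior algebra framework and mimicking the reductions used in the group-theoretic proof; this seems to require genuinely new input beyond what is currently available. Ruling out a nontrivial twist in the ``if'' direction also appears delicate and may call for further cohomological analysis of the internal structure of $D$.
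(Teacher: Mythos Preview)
The statement you are addressing is labelled as a \emph{conjecture} in the paper, and the paper does not claim to prove it. Section~4 establishes only special cases: the conjecture holds when $P$ is abelian (using the forward direction of Brauer's height zero conjecture from \cite{KeMa} together with the Clifford theory of Proposition~\ref{clifford}), when $P$ is normal in $G$ (via K\"ulshammer's structure theorem, which identifies $D$ with a twisted group algebra of $\hyp(\CF)\rtimes E$), and for blocks of $p$-solvable groups (via Fong--Reynolds reduction and Gluck--Wolf). The paper also shows (Proposition~\ref{princheightzerohyper}) that for blocks of principal type and maximal defect, Brauer's height zero conjecture implies Conjecture~\ref{hyperheightzero}, and this is done by exactly the mechanism you describe for the ``only if'' direction: one realises $D$, up to Morita equivalence, as the source algebra of a genuine block of $\CO N$ with $N = O^p(G)$ (Proposition~\ref{princtypehyper}), and then invokes height zero for that block.

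So your proposal is not a proof but an honest strategy sketch, and you have correctly located the main obstruction: realising an abstract hyperfocal subalgebra as (Morita equivalent to) the source algebra of a block of some finite group is not known in general, and every partial result in the paper works precisely by supplying extra hypotheses under which such a realisation is available. Your ``if'' direction plan, however, is too optimistic as stated. There is no reason to expect $K\otimes_{\CO} D$ to be commutative, or Morita equivalent to a commutative algebra, when $\hyp(\CF)$ is abelian: already in the normal-defect-group case $D$ is a twisted group algebra of $\hyp(\CF)\rtimes E$ with $E$ the (typically nontrivial) inertial quotient, and this is neither commutative nor Morita equivalent to a commutative algebra in general. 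The $p'$-degree conclusion there comes from $|E|$ being prime to $p$ and $\hyp(\CF)$ being an abelian normal subgroup, not from any commutativity of $D$. For abelian $P$ the paper does not analyse $D$ structurally at all; it passes through height zero characters of $B$ and Clifford theory instead, and no analogous argument is offered when only $\hyp(\CF)$, rather than $P$, is assumed abelian.
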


Section 2 contains Clifford theoretic considerations regarding
hyperfocal and source algebras. Theorem \ref{degnilp}  is proved in 
Section 3. The last section contains some remarks around 
Conjecture~\ref{hyperheightzero}.

\begin{rem}
For $\chi\in$ $\Irr(B)$ denote by $e_\chi$ the corresponding central 
primitive idempotent in $Z(K\otimes_{\CO} B)$. If $e$ is an idempotent in
$Z(K\otimes_{\CO} B)$, then multiplication by $e$ induces a surjective
$\CO$-algebra homomorphism $B\to Be$; note that $Be$ is an $\CO$-free
quotient algebra of $B$ and an $\CO$-subalgebra of $K\otimes_{\CO} B$ 
but not necessarily a subalgebra of $B$.
The integer $\sum_{\chi\in\Irr_{0} (B)}\ \chi(1) ^2$ is equal to
the $\CO$-rank of the quotient algebra $Be$ of $B$, where
$e=$ $\sum_{\chi\in\Irr_{0} (B)}\ e_\chi$. If $B$ is nilpotent,
then this algebra is Morita equivalent to the commutative
symmetric algebra $\CO P/P'$, where $P'$ is the derived subgroup
of $P$. Indeed, through a Morita equivalence between $B$ and $\CO P$,
the height zero characters in $B$ correspond to the characters
with degree one of $P$, and the intersection of the kernels of these
is $P'$. Okuyama and Tsushima proved in \cite{OkTs} that $B$ is
Morita equivalent to a commutative (and necessarily symmetric)
algebra if and only if $B$ is nilpotent with abelian defect groups.
In a similar spirit one may ask whether there is a characterisation of 
blocks whose quotient $Be$ as defined above is Morita equivalent to a 
commutative symmetric $\CO$-algebra. 
\end{rem}

\begin{rem}
If $|P:\foc(\CF)|\cdot |S:P|^2$ is the exact power of $p$ dividing
$\sum_{\chi\in\Irr_{0} (B)}\ \chi(1) ^2$, then the above theorem
implies that $B$ is nilpotent, hence $\foc(\CF)$ is the
derived subgroup $P'$ of $P$. If $B$ is not nilpotent, then the highest 
power of $p$ dividing  $\sum_{\chi\in\Irr_{0} (B)}\ \chi(1) ^2$ is 
strictly bigger than $|P:\foc(P)|\cdot |S:P|^2$, and one might wonder 
whether it is always at least $|P:P'|\cdot |S:P|^2$. This is, 
however, not the case. Let $p=3$. The group $H=A_4 \times C_4$  has a 
faithful irreducible $\FF_3 H$-module $V$ of dimension $6$.
Let $G=VH$ be the corresponding semidirect product. The group algebra
$\CO G$ has a unique block. If $P$ is a Sylow-$3$-subgroup of $G$, then 
$|P:P'|=27$. This does not divide the sum of the squares of the 
$3'$-degree characters, which is equal to $1548=2^2\cdot 3^2\cdot 43$.
\end{rem}

\section{Extending characters of the hyperfocal subalgebra}

Background material on focal and hyperfocal subgroups of
fusion systems as well as the fusion subsystem $O^p(\CF)$
of the fusion system $\CF$ on $P$, can be found in 
\cite[\S 7.5]{CravenBook}.
We refer to \cite{Pui00} for the notion and basic properties of
hyperfocal subalgebras of source algebras of blocks.
We show in this section that ordinary irreducible characters
of the hyperfocal subalgebra of degree prime to $p$ extend
to the source algebra in precisely $|P:\foc(\CF)|$ ways.
The key ingredient is a special case of a result of Diaz, Glesser,
Park and Stancu \cite{DGPS11}  which extends to fusion  systems work 
of Gagola and Isaacs \cite{GaIs08} on the transfer homomorphism.
For a saturated fusion system ${\mathcal G} $ on a finite $p$-group  $S$,
we denote by $  \tau_{S, \Omega} ^{\mathcal G}   :  S \to S/[S,S] $  the   
transfer  map with respect to some characteristic element $\Omega $ for 
${\mathcal F} $ (see \cite[Definition~2.5]{DGPS11}), and by 
$ T_{\mathcal G} $  the subgroup of $S$ containing $[S,S]$ such that  
$T_{\mathcal G} /[S,S] =$ $\Im(\tau_{S, \Omega}^{\mathcal G}) $. By   
\cite[Lemma~2.6]{DGPS11}, $T_{\mathcal G}$ is independent of the choice 
of $\Omega $.  Further, if $ U$ is a subgroup of $S$ containing 
$\hyp(\mathcal G)$, we denote by ${\mathcal G}_U $, the  unique 
saturated subsytem of $ {\mathcal G}$  on $U$ of $p$-power index 
(see \cite[Theorem~I.7.4]{AKO}).
 
\begin{pro} \label{hypersplitpre}  \cite[Proposition~5.3]{DGPS11}
 Let ${\mathcal G} $ be a saturated fusion system on  a finite 
$p$-group $S$ and let $U$ be a normal subgroup of $S$ containing 
$\hyp(\mathcal G)  $. If  $V$ is a subgroup of $U$  containing 
$T_{{\mathcal G_U}} [U,S]$, then  
$ S/V \cong U/V \times T_{\mathcal G}V/V $. 
\end{pro}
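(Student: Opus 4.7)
The strategy is to exhibit $S/V$ as the internal direct product of the subgroups $U/V$ and $T_{\mathcal{G}}V/V$, which amounts to verifying three things: that $U/V$ is central in $S/V$, that the two subgroups jointly generate $S/V$, and that their intersection is trivial. The first is immediate from the hypothesis, since $V\supseteq[U,S]$ forces $U/V\leq Z(S/V)$, while the inclusion $V\supseteq T_{\mathcal{G}_U}\supseteq[U,U]$ makes $U/V$ itself abelian; in particular the two candidate direct factors commute element-wise.

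For joint generation I would invoke the transfer theorem for the fusion system $\mathcal{G}$: the map $\tau_{S,\Omega}^{\mathcal{G}}$ has image $T_{\mathcal{G}}/[S,S]$ and kernel $\foc(\mathcal{G})$ when restricted to $S$, whence $S=T_{\mathcal{G}}\cdot\foc(\mathcal{G})$. Using the identity $\foc(\mathcal{G})=\hyp(\mathcal{G})\cdot[S,S]$ together with $[S,S]\subseteq T_{\mathcal{G}}$ and the hypothesis $\hyp(\mathcal{G})\subseteq U$, this simplifies to $S=T_{\mathcal{G}}\cdot U$, and hence $S/V=(T_{\mathcal{G}}V/V)(U/V)$.

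The main obstacle is the third condition, $T_{\mathcal{G}}\cap U\subseteq V$, which is equivalent to $(T_{\mathcal{G}}V/V)\cap(U/V)$ being trivial since $V\subseteq U$. My plan here is to compare the transfer map of $\mathcal{G}$, restricted to $U$, with that of the $p$-power-index subsystem $\mathcal{G}_U$ on $U$. Specifically, one should choose a characteristic element $\Omega$ for $\mathcal{G}$ whose restriction along cosets of $U$ in $S$ yields a characteristic element $\Omega_U$ for $\mathcal{G}_U$; then, exploiting normality of $U$ in $S$ and the triviality of the $S/U$-action on $U/[U,S]$, a double-coset decomposition argument shows that for $u\in U$ the image of $\tau_{S,\Omega}^{\mathcal{G}}(u)$ in $U/[U,S]$ coincides with that of $\tau_{U,\Omega_U}^{\mathcal{G}_U}(u)$. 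Consequently $T_{\mathcal{G}}\cap U\subseteq T_{\mathcal{G}_U}\cdot[U,S]\subseteq V$, as required. The delicate point is producing the compatible characteristic elements $\Omega$ and $\Omega_U$, and showing that the error terms in the double-coset expansion genuinely land in $[U,S]$ modulo $[S,S]$; this is the fusion-system analogue of the classical compatibility of transfer with restriction to a normal subgroup, and is the technical core of the Diaz-Glesser-Park-Stancu generalisation of Gagola-Isaacs. Once this compatibility is in hand, assembling the three ingredients yields $S/V\cong U/V\times T_{\mathcal{G}}V/V$.
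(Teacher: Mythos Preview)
The paper does not supply its own proof of this proposition: it is simply quoted from \cite[Proposition~5.3]{DGPS11}, so there is no in-paper argument to compare against. What can be assessed is whether your sketch reproduces the Diaz--Glesser--Park--Stancu argument.

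Your decomposition into the three subclaims (centrality, generation, trivial intersection) is exactly the right framework, and your treatment of the first two is correct: $[U,S]\subseteq V$ gives centrality, and the splitting $S/[S,S]=\foc(\mathcal G)/[S,S]\times T_{\mathcal G}/[S,S]$ from \cite[Proposition~2.7]{DGPS11} together with $\foc(\mathcal G)=\hyp(\mathcal G)[S,S]\subseteq U\,T_{\mathcal G}$ yields $S=U\,T_{\mathcal G}$.

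The third step is where care is needed, and your wording is not quite precise. The element $\tau_{S,\Omega}^{\mathcal G}(u)$ lives in $S/[S,S]$, and there is no canonical map from $S/[S,S]$ to $U/[U,S]$ (note $[U,S]\subseteq[S,S]$, so the map goes the other way). What one actually needs is, first, that $\tau_{S,\Omega}^{\mathcal G}(u)\in U[S,S]/[S,S]$ for $u\in U$ (this uses that $U$, containing $\hyp(\mathcal G)$ and normal in $S$, is strongly $\mathcal G$-closed, so the biset computation stays inside $U$ modulo $[S,S]$), and second, a comparison showing that under the identification $U[S,S]/[S,S]\cong U/(U\cap[S,S])$ this image agrees with $\tau_{U,\Omega_U}^{\mathcal G_U}(u)$ modulo the appropriate commutators. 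Even then, the implication ``$\tau_S(U)\subseteq T_{\mathcal G_U}[S,S]/[S,S]$ hence $T_{\mathcal G}\cap U\subseteq T_{\mathcal G_U}[U,S]$'' requires an extra argument: knowing the image of $\tau_S|_U$ constrains $\tau_S(U)$, but $T_{\mathcal G}\cap U$ consists of elements of $U$ lying in the \emph{full} image $\tau_S(S)$, not merely in $\tau_S(U)$. One bridges this by using that $\bar\tau_S$ restricts to an automorphism of $T_{\mathcal G}/[S,S]$ and that $U[S,S]/[S,S]$ contains $\foc(\mathcal G)/[S,S]=\ker\bar\tau_S$, so $\bar\tau_S$ maps $U[S,S]/[S,S]$ onto $(T_{\mathcal G}\cap U[S,S])/[S,S]$. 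You have correctly flagged this as the technical heart of the DGPS argument; just be aware that the passage from the transfer comparison to the desired inclusion is not a one-liner.
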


\begin{cor}\label{hypersplit}   
Let ${\mathcal G}$ be a saturated fusion system on a finite $p$-group 
$S$. Then  $\hyp(\mathcal G)/[\hyp(\mathcal G), S]$ is a direct 
factor of $S/[\hyp(\mathcal G),S ] $.  
\end{cor}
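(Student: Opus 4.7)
The plan is to deduce the corollary directly from Proposition~\ref{hypersplitpre} by an appropriate choice of $U$ and $V$. I would take $U = \hyp(\mathcal{G})$, which is strongly closed in $\mathcal{G}$ and hence normal in $S$, and $V = [\hyp(\mathcal{G}), S]$, a subgroup of $U$. Once the hypotheses of Proposition~\ref{hypersplitpre} are verified for this pair, the conclusion reads
$$ S/[\hyp(\mathcal{G}), S] \; \cong \; \hyp(\mathcal{G})/[\hyp(\mathcal{G}), S] \; \times \; T_{\mathcal{G}} V / V, $$
which exhibits $\hyp(\mathcal{G})/[\hyp(\mathcal{G}), S]$ as a direct factor of $S/[\hyp(\mathcal{G}),S]$, as required.

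The only condition needing work is that $V$ contains $T_{\mathcal{G}_U}[U, S]$. Since $V = [U, S]$ already, it suffices to show $T_{\mathcal{G}_U} \subseteq [U, S]$, for which I would prove the stronger statement $T_{\mathcal{G}_U} = [U, U]$. Recall that $\mathcal{G}_U$ is the unique saturated subsystem of $\mathcal{G}$ on $U = \hyp(\mathcal{G})$ of $p$-power index. Any saturated subsystem of $\mathcal{G}_U$ of $p$-power index supported on a proper subgroup $U_0 \subsetneq U$ would, by transitivity, also be a saturated subsystem of $\mathcal{G}$ on $U_0$ of $p$-power index, violating the minimality of $\hyp(\mathcal{G})$. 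Consequently $\hyp(\mathcal{G}_U) = U$, and therefore $\foc(\mathcal{G}_U) = \hyp(\mathcal{G}_U) \cdot [U, U] = U$.

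At this point I would invoke the characterisation of $T_{\mathcal{F}}$ coming out of \cite{DGPS11} as a complement to $\foc(\mathcal{F})$ in $S$ above $[S,S]$, namely $S = T_{\mathcal{F}} \cdot \foc(\mathcal{F})$ with $T_{\mathcal{F}} \cap \foc(\mathcal{F}) = [S, S]$. Applied to $\mathcal{G}_U$ on $U$, the identity $\foc(\mathcal{G}_U) = U$ forces $T_{\mathcal{G}_U} = [U, U]$, as desired, and Proposition~\ref{hypersplitpre} then gives the corollary. The main obstacle is having this complementation description of $T_{\mathcal{F}}$ available in exactly the form needed; once that ingredient from \cite{DGPS11} is in hand, the rest is a formal specialisation of Proposition~\ref{hypersplitpre}.
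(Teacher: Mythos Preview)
Your proof is correct and follows essentially the same route as the paper: set $U=\hyp(\mathcal{G})$, use minimality of $\mathcal{G}_U=O^p(\mathcal{G})$ to get $\foc(\mathcal{G}_U)=\hyp(\mathcal{G}_U)=U$, invoke the complementation result from \cite{DGPS11} (stated there as Proposition~2.7) to deduce $T_{\mathcal{G}_U}=[U,U]$, and then apply Proposition~\ref{hypersplitpre} with $V=[U,S]$. The only cosmetic difference is that the paper phrases the complementation as a direct product decomposition of $U/[U,U]$ rather than as $U=T_{\mathcal{G}_U}\cdot\foc(\mathcal{G}_U)$ with intersection $[U,U]$, but these are equivalent.
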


\begin{proof}  
Let   $U= \hyp(\mathcal G)$.  By the minimality  of $ {\mathcal G_U }$    
as  normal saturated subsytem of ${\mathcal G } $  of $p$-power index, 
we have that  $ \foc({\mathcal G_U })= \hyp({\mathcal G}_U)  = U$ 
(see \cite [Theorem~7.53]{CravenBook}  and \cite[Lemma~ I.7.2]{AKO}).       
By  Proposition 2.7  of \cite{DGPS11} and the first line of its proof   
we have that 
$$ U/[U,U] \cong    \foc({\mathcal G_U })/[U,U] \times  
T_{\mathcal G_U} /[U,U] . $$  
Since $\foc({\mathcal G_U }) = U $, it follows that 
$T_{\mathcal G_U}=[U,U] $. The corollary follows from 
Proposition~\ref{hypersplitpre}  applied with $V  = [U,S] $. 
\end{proof}

Keep the previous notation. In addition, set $Q=$ $\hyp(\CF)$ and 
$R=\foc(\CF)$, and set $A=iBi$, the source algebra of $B$ corresponding 
to the defect group $P$ of $B$ and the source idempotent $i$. By 
\cite[Theorem 1.8] {Pui00} the hyperfocal subalgebra $D$ is the 
unique (up to $(A^P)^{\times}$-conjugation)  $P$-stable unitary 
subalgebra of $A$ such that 
$$D \cap Pi =  Qi   \  \   \  {\rm and } \  \  \    A =  
D \otimes_{\CO Q} \CO P =  \oplus_{u \in P/Q}  Du. $$
In particular,  $D$ is a $Q$-interior $P$-algebra. We denote by 
$\hat A =K\otimes _{\CO}  A$ and  $\hat D =K \otimes _{\CO}  A$  the  
$K$-algebras obtained by extensions of scalars to $K$. The $K$-algebras 
$\hat A$ and $\hat D$ are semi-simple; indeed, for $\hat A$ this follows 
from the Morita equivalence between $A$ and $B$, and for $\hat D$ this 
is a consequence of the fact that $J(\hat D)\hat A$ is an ideal in 
$\hat A$, hence nilpotent, hence zero. By replacing $K$ by a suitable  
finite extension, we  may and will assume that $\hat A$ and $\hat D$ are     
also split. We denote by $\Irr(A)$ (respectively ($\Irr (D)$) a set 
of representatives of isomorphism classes of simple 
$\hat A$-modules (respectively $\hat D$-modules).   

\medskip
The   $Q$-interior $P$-algebra  structure of $D$ allows the usual 
notions of Clifford theory  to be carried over to the inclusion of   
$\hat D$ in $\hat A$ (see  \cite{Fan09}). In particular, $P$ acts 
on $\Irr (D)$. For  $W\in\Irr(D) $ and  $ u\in P $  we denote by  
$ \, ^u W\in\Irr(D) $ the  image of $W$ under the action of $u$ and  
by  $I_P(W)$   the  stabiliser of $W$ in $P$. Note that 
$Q \leq I_P(W)$.  We say that  a simple $\hat A$-module  {\it covers} $W$   
if $W$  appears as a composition factor  of  $\Res_{\hat D} (V)$ and 
denote by  $\Irr(A| W )$ the subset of $\Irr (A)$  which cover $W$.
For $W\in \Irr (D)$ denote by $e_V$ the primitive central idempotent 
of $\hat D $ corresponding to $W$ and similarly for  $\hat A$. Note that 
$ \hat D e_{W} \cong \Mat_{\dim_K (W)} (K)$.  

Suppose that   $W \in \Irr (D)$  is $P$-stable. Then  
$\hat D e_{W} \cong \Mat_{\dim_K (W)} (K)$  is a $P$-algebra and  by 
the Noether-Skolem theorem, every automorphism of $\hat D e_{W}$ is  
inner. Let $I$  be a set of coset representatives  of $Q$ in $P$  
containing $1$. For each  $y \in  I$, choose  an  element $ s_y \in$
$(\hat D e_W){\times}$ such that $s_y d s_{y^{-1}}   = \,^ yd $ for all 
$d \in \hat D$. For each $x \in Q $ and $  y \in I $, set 
$s_{xy}=xs_y $.  Then, for all $x, y \in P$,  we have 
$s(x)s(y) s(xy)^{-1} \in  Z ( \hat D e_{W} )^{\times} $.  Identifying  
$Z ( \hat D e_W ) $  with $K$, the map 
$ \beta :   P \times P   \to  K^{\times} $ given by 
$ \beta (y, y') =  s_{yy'}^ {-1}  s_y s_{y'}  $,  where $y, y' \in P$,  
is a $2$-cocycle such that $\beta (xy, x'y') =\beta(y, y') $ for  all  
$x, x' \in Q $.   So, $\beta$ is the restriction to $P$ along the 
canonical map $P\to$ $P/Q$ of a $2$ -cocycle 
$\bar \beta : P/Q \times P/Q \to K ^{\times}$. We denote by $\alpha_W$ 
the image of $ \bar \beta   $ in $H^2(P/Q, K^{\times}) $.  Then   
$\alpha_W $ is independent of the choice of $I$ and the choice  of 
elements $s_y $, $y\in  I$.  

For $L$  a finite group  and $\alpha \in H^2(L, K^{\times}) $, we 
denote by $K_{\alpha} L$  a twisted  group  algebra of $L$   over 
$K$ corresponding to $\alpha$. The following proposition is a 
rewording of  some of the results in  \cite{Fan09}.

\begin{pro} \label{clifford} 
Let $ V \in \Irr (A) $ and  $ W  \in \Irr (D) $.
\begin{enumerate}
\item  $\Res_{\hat D} (V)$ is semi-simple,  and if $V$ covers $W$, then 
$$ \Res_{\hat D} (V) \cong   m_{V}  \sum_{u \in P/I_P(W)} \,^uW . $$

\item $ V$ covers $W$ if and only if $e_We_V \ne 0 $.

\item  Suppose that $W\in\Irr(A)$ is $P$-stable. Then   
$$\hat A  e_W \cong \hat D e_W \otimes _K K_{\alpha_W {-1}}(P/Q)$$  
as $K$-algebras.  In particular, there is a bijection $V \to V_0$   
between $\Irr( A| W ) $ and $ \Irr(K_{\alpha_W} ( P/Q) )$   such 
that $\dim_K(V) =$ $\dim_K(W) \dim_K(V_0)$.

\item  Suppose that $ V$ covers $W$. If $\dim_K(V)$ is 
prime to $p$ then $W$ is $P$-stable, $\dim_K(W)=$ $\dim_K(V)$
is prime to $p$ and $\alpha_W$ is trivial.

\item   Suppose that $W$ is $P$-stable, $\dim_K(W) $ is prime to $p$,    
and  $\alpha_W $ is trivial.  Then there are $[P:R]$ elements in  
$\Irr(A|W)$ whose dimension is prime to $p$ and they all have the 
same degree $\dim_K(W)$.
\end{enumerate}
\end{pro}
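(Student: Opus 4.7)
The plan is to combine part (iii) of the proposition with the fact that $P/Q$ is a finite $p$-group, and then use the identity $R=\foc(\CF)=P'Q$ recorded in the introduction.

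First, I would apply part (iii). Since $\alpha_W$ is trivial, $K_{\alpha_W^{-1}}(P/Q)$ is just the ordinary group algebra $K[P/Q]$, and the bijection $V\mapsto V_0$ between $\Irr(A\mid W)$ and $\Irr(K[P/Q])$ satisfies $\dim_K(V)=\dim_K(W)\cdot\dim_K(V_0)$. Because $\dim_K(W)$ is prime to $p$, the elements $V\in\Irr(A\mid W)$ of dimension prime to $p$ are exactly those for which $\dim_K(V_0)$ is prime to $p$, and each such $V$ has dimension $\dim_K(W)\cdot\dim_K(V_0)$.

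Next, since $Q\leq P$ and $P$ is a $p$-group, the quotient $P/Q$ is itself a finite $p$-group, so every irreducible character of $P/Q$ has $p$-power degree; thus the $p'$-degree irreducibles of $K[P/Q]$ are precisely the linear characters, each of dimension $1$. Their number equals the index of the derived subgroup, namely $|P/Q:(P/Q)'|=|P:P'Q|$. Combining this with $\dim_K(V_0)=1$, the corresponding characters $V\in\Irr(A\mid W)$ all have the common degree $\dim_K(W)$.

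Finally, I invoke the relation $\foc(\CF)=\hyp(\CF)\cdot P'$ from \cite[Lemma~7.2]{AKO} noted in the introduction, which gives $R=QP'$ and hence $|P:P'Q|=|P:R|$. This yields exactly $|P:R|$ elements of $\Irr(A\mid W)$ of $p'$-dimension, each of degree $\dim_K(W)$, completing the proof.

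There is no real obstacle here; the statement is a formal consequence of part (iii) together with the elementary observation that in a $p$-group the number of $p'$-degree irreducibles is the number of linear characters. The only point to be careful about is to identify the derived quotient of $P/Q$ with $P/P'Q$ and to invoke the relation $R=P'Q$ to convert the count into the form stated.
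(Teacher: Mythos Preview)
Your argument correctly derives part (v) from part (iii): with $\alpha_W$ trivial, the bijection $V\mapsto V_0$ of (iii) identifies $\Irr(A\mid W)$ with $\Irr(K[P/Q])$; since $P/Q$ is a $p$-group and $\dim_K(W)$ is prime to $p$, the $p'$-degree members of $\Irr(A\mid W)$ correspond exactly to the linear characters of $P/Q$, there are $|P:P'Q|$ of them, and the identity $R=\foc(\CF)=\hyp(\CF)P'=QP'$ from the introduction converts this to $|P:R|$. Each such $V$ has degree $\dim_K(W)\cdot 1=\dim_K(W)$. Nothing is missing here.

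Two remarks are in order. First, the proposition as stated has five parts, and your proposal addresses only (v), taking (i)--(iv) as given. Second, the paper itself does not give a self-contained argument for any of the parts: its entire proof is the sentence ``See the proof of \cite[Theorem~1.6]{Fan09}.'' So for (v) your deduction is exactly the standard one that one would extract from part (iii), and is in fact more explicit than what the paper records; for (i)--(iv) you have not offered anything, and the paper simply delegates those to Fan's Clifford-theoretic analysis of the hyperfocal subalgebra. If your intended target was only part (v), then your proof is correct and matches the natural approach.
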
 
 
\begin{proof}   See   the proof of  \cite[Theorem~1.6] {Fan09}. 
\end{proof}

Corollary \ref{hypersplit} provides the following key extendibility result.

\begin{pro} \label{extend} 
Suppose that $ W \in \Irr (D)$ is  $P$-stable and that  $\dim_K(W)$ is 
prime to $p$. Then $\alpha_W $ is trivial. Equivalently, $W$ extends
to $|P:R|$ pairwise nonisomorphic simple $\hat A$-modules.
\end{pro}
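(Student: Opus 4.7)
The plan is to prove triviality of $\alpha_W \in H^2(P/Q, K^{\times})$ by showing it is annihilated by two coprime integers. Since $P/Q$ is a finite $p$-group, $H^2(P/Q, K^{\times})$ is annihilated by $|P/Q|$, so $\alpha_W$ is $p$-power torsion. Because $n := \dim_K(W)$ is coprime to $p$ by hypothesis, it will suffice to show $\alpha_W^n = 1$, that is, to produce a $1$-cochain on $P/Q$ whose coboundary equals $\bar\beta^n$, where $\bar\beta$ is the cocycle on $P/Q$ representing $\alpha_W$.

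The tool will be the determinant. Fixing the representatives $s_y \in (\hat D e_W)^{\times}$ used to define $\alpha_W$, so that $s_x = xe_W$ for $x \in Q$ and $s_y s_{y'} = s_{yy'}\beta(y,y')$ with $\beta$ the inflation of $\bar\beta$, we identify $\hat D e_W \cong \Mat_n(K)$ and take determinants of the latter identity to obtain
\[
\det(s_y)\,\det(s_{y'}) = \beta(y,y')^n\,\det(s_{yy'}).
\]
So $\mu(y) := \det(s_y)$ defines a $1$-cochain $\mu : P \to K^{\times}$ with coboundary $\beta^n$. The restriction $\mu|_Q$ equals the determinant character $\chi = \det_W : Q \to K^{\times}$, which need not be trivial, so $\mu$ does not descend to $P/Q$ on its own. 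However, since $W$ is $P$-stable and $Q$ is normal in $P$, the character $\chi$ is $P$-invariant and therefore factors through $Q/[Q,P]$. By Corollary \ref{hypersplit}, $Q/[Q,P]$ is a direct factor of $P/[Q,P]$; extending $\chi$ by $1$ on the complementary factor yields a homomorphism $\tilde\chi : P \to K^{\times}$ with $\tilde\chi|_Q = \chi$. Setting $\sigma := \mu \cdot \tilde\chi^{-1}$, a direct computation (using $s_{xy} = xe_W\, s_y$ together with $\tilde\chi(xy) = \chi(x)\tilde\chi(y)$) shows $\sigma(xy) = \sigma(y)$ for $x \in Q$ and $y \in P$, so $\sigma$ descends to $\bar\sigma : P/Q \to K^{\times}$. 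Since $\tilde\chi^{-1}$ is a homomorphism its coboundary is trivial, so $\delta\sigma = \delta\mu = \beta^n$ and hence $\delta\bar\sigma = \bar\beta^n$. This gives $\alpha_W^n = 1$, and combined with the $p$-power-torsion observation we conclude $\alpha_W = 1$. The equivalent formulation with $|P:R|$ extensions then follows from Proposition \ref{clifford}(iii) and (v): triviality of $\alpha_W$ identifies $\hat A e_W$ with $\hat D e_W \otimes_K K(P/Q)$, so $p'$-degree extensions of $W$ correspond to linear characters of $P/Q$, or equivalently of $P/\foc(\CF) = P/R$.

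The main obstacle is the extendibility of the determinant character $\chi$ from $Q$ to $P$. A $P$-invariant linear character of a normal subgroup does not in general extend to the ambient group, and this is precisely where Corollary \ref{hypersplit} (and hence the fusion-theoretic result of Diaz, Glesser, Park and Stancu) is indispensable: it supplies the direct-factor decomposition of $Q/[Q,P]$ inside $P/[Q,P]$ that makes the extension possible. Without this ingredient the $1$-cochain $\mu$ cannot be corrected to factor through $P/Q$, and the argument breaks down.
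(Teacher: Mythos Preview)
Your proof is correct and follows essentially the same strategy as the paper: both use the determinant character $\det_W$ on $Q$, invoke Corollary~\ref{hypersplit} to extend it to $P$, and conclude that $\alpha_W$ has order dividing $n=\dim_K(W)$, hence is trivial in the $p$-group $H^2(P/Q,K^\times)$. The only cosmetic difference is that the paper re-chooses the representatives $s_y$ so that $\det(s_y)$ equals the extended character (forcing $\beta(x,y)^n=1$ as a function), whereas you keep the $s_y$ fixed and instead correct the $1$-cochain $\mu=\det\circ s$ by $\tilde\chi^{-1}$ to make it descend to $P/Q$; both routes witness $\alpha_W^n=1$.
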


\begin{proof}   
Let $\delta : Q \to K^{\times}$ be defined by $\delta (x)= $
$\det(x e_W )$, where $x\in Q$. Then $\delta $ is a $P$-stable
group homomorphism. Since every element of $P$ acts as an inner 
automorphism  of $\hat D e_W $,  $[Q,P] \leq \Ker (\delta) $.    
Thus, by Corollary \ref{hypersplit},  $\delta $ extends to a group 
homomorphism   from $P$ to $K^{\times}$.  Choose such an extension, 
and  by abuse of notation, denote it again by $\delta$. Let $I$ be a 
set of coset representatives   of $Q$ in $P$.   For each $y \in I$, 
choose the element $s_y$ in $(\hat De_W)^{\times}$  as in the 
definition of $\alpha_W $  such that  $\det(s_y ) =$ $\delta (y) $.  
This is possible  as the dimension of $W$ is prime to $p$  and  
$\delta (y) $ is a $|P|$-th root of unity (and since  we may assume 
that $K$ contains a $|P|$-th root of unity). Then for all $ x\in P$, 
$\det(s_x) =$ $\delta (x) $ and consequently, $\det(s_{xy}) =$
$\det(s_x) \det(s_y) $  for all $x, y \in P$.  By 
the definition of $\beta $, we have $s_xs_y =\beta (x, y) s_{xy} $ for 
all $x, y \in P$.  Hence , for all $x, y\in P $, we have 
$\beta(x,y)^n =1 $,  where $n= \dim_K(W) $. Consequently,  
$\alpha_W  \in  H^2(P/Q, K^{\times} )$  has order dividing $n$. Since 
$n$ is prime to $p$  and $H^2(P/Q, K^{\times} )$ is a $p$-group,  it 
follows that $\alpha_W =1$, as  claimed.   
\end{proof}

\section{Proof  of Theorem \ref{degnilp}}
 
\begin{proof}[{Proof of Theorem \ref{degnilp}}]     
As before, we denote by $A=$ $iBi$ the source algebra of
$B$ with respect to the defect group $P$ and the source
idempotent $i\in$ $B^P$. We denote by $\CF$ the fusion system
of $B$ on $P$ determined by $i$. We set $R=$  $\foc(\CF)$ and
$Q=$ $\hyp(\CF)$.
We first prove the equivalence of (i) and (ii). Recall from
\cite[Introduction]{Pui00} or \cite{Rob08} that the  Brou\'e-Puig  
$*$-construction from \cite{BrPuloc} on characters yields a free action 
of $P/R $ on  $\Irr_0(B) $. Let $\chi, \chi' \in \Irr_0(B)$.   
If $\chi, \chi' $ lie in the same  orbit of  $P/R$, then  from the 
definition of the $*$-construction, it is  immediate 
that $ \chi(x) =\chi'(x) $  for all $p$-regular elements   of $G$, hence  
$\chi(j)=\chi'(j) $ for any idempotent $j$ of $\CO G$  
(as $\chi$ and $\chi'$  have the same decomposition numbers).
In particular, if $\chi $ and $\chi'$ are in the same 
orbit of $P/R$ on $\Irr_0(B) $, then $\chi(1) =\chi'(1) $ and 
$\chi(i)=\chi'(i)$. Thus, letting ${\mathcal S} $ denote a set of 
representatives of the $P/R$-orbits on $\Irr_0(B)$, we have that 
$$  \sum_{ \chi \in \Irr_0 (B) }  \chi(1) ^2      =    
|P:R|\sum_{ \chi \in {\mathcal S} }  \chi(1)^2; $$
$$  \sum_{ \chi \in \Irr_0 (B) }  \chi(i) ^2      =    
|P:R|\sum_{ \chi \in {\mathcal S} }  \chi(i)^2.$$
Since $|S: P | $ divides $\chi (1)  $  for all $\chi \in \Irr (B) $,    
it follows from the above that in order to prove the equivalence of
(i) and (ii), it suffices to prove that  the integer  
$\sum_{ \chi \in {\mathcal S}} \chi(i)^2$ belongs to $J(\CO )$  
if and only if the integer 
$\sum_{ \chi \in {\mathcal S}}(\frac{\chi(1)}{| S:P|})^2$   
belongs to $J(\CO )$. 

Let $\omega_B: Z(B) \to Z(B)/J(Z(B))=k$ be the canonical 
surjection. By \cite{PiPu} or \cite[9.3]{ThevdualityG},  
$\Tr_{P}^{G} (i) $ is an invertible element of $Z(B)$, hence  
$\omega_B (\Tr_{P}^{G}(i)) \ne 0$.
Recall that  for any $ z\in Z(B)$, and any $\chi\in\Irr (B)$, 
we have $\frac{\chi(z)}{\chi(1)} \in {\mathcal O}$ and    
$\omega_B(z) =\overline {\frac{\chi(z)  }{\chi(1)}}$, where for 
$x\in\CO $ we denote by $\bar x$ the image in $k$ of $x$ under the 
canonical surjection $ \CO/J(\CO)$.
Thus, for any $\chi \in \Irr  (B) $, we have 
$$ 0\ne   \omega_B (\Tr_{P}^{G}(i)) = 
\overline {\frac{\chi(\Tr_P^G(i))}{\chi(1)}}     =    
\overline {|G: P|\frac{ \chi (i)}{ \chi(1)}}. $$
Choose  $\lambda \in \CO^{\times} $ with $\bar\lambda =  
\omega_B (\Tr_{P}^{G}(i))$.
The above shows that for any $\chi\in\Irr(B)$, there exists    
an element $t_{\chi}\in J(O)$ such that 
$$|G: P| \frac{ \chi (i)}{ \chi (1) } = \lambda   +  t_{\chi} . $$
Since, $|G:S| $ is  invertible in $\CO $,  by suitably replacing 
$\lambda $ and $t_{\chi} $,  we  obtain that   there exists 
$\lambda \in \CO^{\times}$ such that for all $\chi \in \Irr (B) $, there 
exists $t_{\chi} \in J(O)$ such that 
$$|S: P| \frac{ \chi (i)}{ \chi (1) } = \lambda + t_{\chi}\ . $$
Since $t_{\chi }  \in J(\CO) $, we have that   
$$  \sum_{ \chi \in {\mathcal S} }  \chi(i) ^2  =    
\sum_{ \chi \in {\mathcal S} } (\lambda + t_{\chi})^2  
(\frac{\chi(1)}{ |S:P|})^2   \equiv  
\lambda^2 \sum_{ \chi \in {\mathcal S}}  (\frac{\chi(1)}{|S:P|})^2 
\mod (J(\CO)).  $$ 
The equivalence of (i) and (ii) follows. If (iii) holds, then
by the above, the height zero characters in $B$ form a unique
regular $P/R$-orbit with respect to the $*$-construction;
in particular, all height zero characters of $B$ have the same degree
with $p$-part $|S:P|$, and hence (iii) implies (i). We prove next that 
(ii) implies (iv).   
The  strategy is the same as used for \cite[Lemma 4]{Isa86}.  Let 
${\mathcal V}$ denote the subset of $\Irr_0 (D) $  consisting of 
$P$-stable elements.
Note that $\chi\in\Irr_0(B)$ is of height zero if and only if 
$\chi(i)$ is prime to $p$. Moreover, $\chi(i)$ is the 
dimension of the simple $\hat A$-module corresponding to $\chi$ 
under the canonical  Morita equivalence between $B$ and $A$. By 
Proposition~\ref{clifford} (i)-(iv)  and Proposition~\ref{extend} 
it follows that   
$$  \sum_{\chi  \in \Irr_{0} (B) } \chi(i) ^2  =  
|P:R|   \sum_{ W \in {\mathcal V}}  \dim_K(W)^2.$$
In particular, $|P:R|$ divides  
$\sum_{\chi  \in \Irr_{0} (B) } \chi(i) ^2$ and we have 
$$\frac{1}{|P:R|}  \sum_{\chi  \in \Irr_{0} (B) } \chi(i) ^2  =    
\sum_{ W \in {\mathcal V}}  \dim_K(W)^2.$$
If $ W' \in \Irr(D) \setminus {\mathcal V} $, then either $p$ 
divides the dimension of $W'$ or the size of the  
$P$-orbit of $W'$ is divisible by $p$. Hence,
$$\sum_{ W \in \mathcal  V} \dim_K(W)^2 \equiv   
\sum_{W\in\Irr (D) } \dim_K(W)^2     \    (\mod     p).  $$  
Assume that $\frac{1}{|P:R|}  
\sum_{\chi\in \Irr_{0}(B)}\ \chi(i) ^2$ is a $p'$-integer.   
Then by the above, $\dim_K(\hat D) =$ 
$\sum_{W\in\Irr(D) } \dim_K(W)^2$ is a $p'$-integer. The 
hyperfocal subalgebra $D$ is a direct summand of $A$ as a left 
$D$-module, hence projective as a left $\CO Q$-module. In particular, 
$|Q|$ divides the $\CO$-rank of $D$. Thus $Q=1$, which means that   
$\CF$ and hence $B$ are nilpotent. This proves that (ii) implies (iv). 
As mentioned earlier, the fact that (iv) implies any of (i), (ii), 
(iii)  is immediate from the structure theorem 
\cite[Theorem 1.6]{Punil} for source algebras of nilpotent blocks.
\end{proof} 

\section{Around Conjecture~\ref{hyperheightzero}.}

As before, $G$ is a finite group, $B$ a block algebra of $\OG$
with corresponding block idempotent $b=$ $1_B$, and $P$ a defect group 
of $B$. Let $i\in$ $B^P$ be a source idempotent and $A=$ $iBi$ the
corresponding source algebra. 

\begin{pro}
Suppose $P$ is abelian. Then $\Irr_0(D)= \Irr (D)$.
\end{pro}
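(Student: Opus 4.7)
The plan is to combine the Kessar--Malle theorem (every irreducible character of a block with abelian defect group has height zero) with Proposition~\ref{clifford}~(iv). The broad strategy is first to show, under the abelian hypothesis, that every simple $\hat A$-module has dimension prime to $p$, and then to transfer this property down to $\Irr(D)$ by observing that every simple $\hat D$-module is covered by some simple $\hat A$-module.

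For the first step, I would invoke Kessar--Malle to conclude that $\Irr_0(B) = \Irr(B)$, so $\chi(1)_p = |G:P|_p = |S:P|$ for every $\chi \in \Irr(B)$. The congruence $|S:P|\chi(i)/\chi(1) \equiv \lambda \pmod{J(\CO)}$ with a fixed $\lambda \in \CO^{\times}$, already established in the proof of Theorem~\ref{degnilp}, then says that $|S:P|\chi(i)/\chi(1)$ is a unit in $\CO$, which forces $\chi(i)_p = \chi(1)_p/|S:P| = 1$. Via the Morita equivalence between $A$ and $B$ a simple $\hat A$-module attached to $\chi$ has $K$-dimension $\chi(i)$, so every simple $\hat A$-module has dimension prime to $p$.

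For the second step, the decomposition $A = \oplus_{u \in P/Q} Du$ exhibits $\hat A$ as a free left $\hat D$-module of rank $[P:Q]$, so every element of $\Irr(D)$ occurs as a composition factor of $\Res_{\hat D}(\hat A)$; writing $\hat A$ as a direct sum of its simple $\hat A$-modules then forces every $W \in \Irr(D)$ to be covered by some $V \in \Irr(A)$. Applying Proposition~\ref{clifford}~(iv) to such a pair $(V,W)$ gives $\dim_K W = \dim_K V$ prime to $p$, so $W \in \Irr_0(D)$ and the proposition follows. The principal external input, and hence the main obstacle to a fully self-contained argument, is the Kessar--Malle theorem; any approach avoiding it would presumably demand a direct structural analysis of $D$ for abelian $P$, perhaps using Corollary~\ref{hypersplit} (which presents $Q$ as a direct factor of $P$ when $[Q,P] = 1$) to control the $P$-action on $\Irr(D)$ without appealing to height zero for $B$.
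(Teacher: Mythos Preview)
Your proof is correct and follows exactly the route the paper indicates: invoke Kessar--Malle to get height zero for all $\chi \in \Irr(B)$, then use Proposition~\ref{clifford}(iv) to pull this down to $\Irr(D)$. The paper's proof says only that the result ``follows from'' these two ingredients, so your argument simply fills in the details (your detour through the congruence from the proof of Theorem~\ref{degnilp} to pass from $\chi(1)$ to $\chi(i)$ is fine, though you could also cite directly the standard fact, noted in that same proof, that $\chi$ has height zero if and only if $\chi(i)$ is prime to $p$).
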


\begin{proof}     
This follows from the forward direction of the height zero conjecture
(proved by Kessar and Malle in \cite{KeMa}),  and Proposition 
\ref{clifford}.     
\end{proof}

\begin{pro}
Conjecture~\ref{hyperheightzero} holds for blocks with a normal 
defect group.
\end{pro}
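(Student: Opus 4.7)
My plan is to invoke Külshammer's structure theorem for source algebras of blocks with normal defect group, which reduces the conjecture to a direct Clifford-theoretic computation in an ordinary twisted group algebra.

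First, I would use Külshammer's theorem to identify $A=iBi$, as an interior $P$-algebra, with a twisted group algebra $\CO_{\alpha}[P\rtimes E]$, where $E=N_{G}(P,e_{P})/PC_{G}(P)$ is the inertial quotient (a $p'$-group) and $\alpha\in H^{2}(E,\CO^{\times})$. Under this identification the fusion system $\CF$ coincides with the fusion system of $P\rtimes E$ on $P$, so that $Q:=\hyp(\CF)=[P,E]$ and $R=\foc(\CF)=[P,E]\cdot P'$. Note that $\alpha$ is inflated from $E$, so inside $\CO_{\alpha}[P\rtimes E]$ the subalgebra generated by $P$ is just $\CO P$, with no twist.

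Next I would identify the hyperfocal subalgebra explicitly. Consider $D':=\CO_{\alpha}[Q\rtimes E]\subseteq A$. Since $Q$ is normal in $P$ and stable under $E$, this is a $P$-stable $Q$-interior $P$-subalgebra; moreover $D'\cap Pi=Qi$ and $A=\bigoplus_{u\in P/Q}D'u$. By the uniqueness in Puig's \cite[Theorem~1.8]{Pui00}, $D$ is $(A^{P})^{\times}$-conjugate to $D'$, so I may take $D=\CO_{\alpha}[Q\rtimes E]$, with $\CO Q\subseteq D$ embedded as an untwisted group subalgebra.

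For the forward direction, assume $Q$ is abelian. Every $V\in\Irr(D)$ lies, by Clifford theory along $Q\triangleleft Q\rtimes E$, above some linear character $\lambda\in\Irr(Q)$; it is induced from a simple module $V_{0}$ of a twisted group algebra $K_{\alpha_{\lambda}}[Q\rtimes E_{\lambda}]$ extending $\lambda$, where $E_{\lambda}$ is the stabiliser of $\lambda$ in $E$. Since $\lambda$ is linear, $\dim V_{0}=\dim\psi$ for a projective irreducible representation $\psi$ of $E_{\lambda}$, and $\dim\psi$ divides $|E_{\lambda}|$. Hence $\dim V=[E:E_{\lambda}]\cdot\dim\psi$ divides $|E|$, which is a $p'$-number; thus $\Irr(D)=\Irr_{0}(D)$.

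For the converse, assume $Q$ is nonabelian and pick $\chi\in\Irr(Q)$ with $\chi(1)=p^{a}>1$. Clifford theory applied to the normal subgroup $Q\triangleleft Q\rtimes E$ yields some $V\in\Irr(D)$ lying above $\chi$, and $\dim V$ is a multiple of $\chi(1)$, hence divisible by $p$. So $\Irr_{0}(D)\ne\Irr(D)$.

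The main obstacle is the clean identification of $D$ as $\CO_{\alpha}[Q\rtimes E]$: one must simultaneously verify the three defining properties of the hyperfocal subalgebra ($P$-stability, the intersection $D\cap Pi=Qi$, and the tensor decomposition $A=D\otimes_{\CO Q}\CO P$), and keep track of how the cocycle $\alpha$ restricts, in order to ensure that $\CO Q$ sits inside $D$ as an ordinary (untwisted) group algebra so that nonlinear characters of $Q$ are genuine characters of $D$ upon induction. Once this is in place, the two directions reduce to standard Clifford theory for twisted group algebras with a normal $p$-Sylow subgroup and $p'$-complement.
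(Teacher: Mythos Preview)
Your proposal is correct and follows essentially the same route as the paper: both invoke K\"ulshammer's structure theorem to identify the source algebra with $\CO_{\alpha}[P\rtimes E]$ and the hyperfocal subalgebra with $\CO_{\alpha}[Q\rtimes E]$, after which the conjecture reduces to standard Clifford theory for a twisted group algebra with normal Sylow $p$-subgroup $Q$ and $p'$-complement $E$. The paper's proof compresses your forward and converse arguments into the single phrase ``The result follows,'' so your version simply unpacks what the authors leave implicit.
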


\begin{proof}  
Suppose that the defect group $P$ of $B$ is normal in $G$.
By K\"ulshammer's structure theorem in \cite{Kuenormal}, the source 
algebra $A$ is isomorphic to a twisted group algebra of $P\rtimes E$, 
where $E$ is the inertial quotient of $B$.  From this it follows that 
a hyperfocal source algebra is isomorphic to a twisted group algebra 
of $Q \rtimes E$. The result follows.
\end{proof}

The block $B$ is said to be {\it of principal type} if for any
subgroup $Q$ of $P$, the idempotent $\Br_Q(b)$ is a block of
$kC_G(Q)$, or equivalently, if the algebra $B(Q)=$ 
$kC_G(Q)\Br_Q(b)$ is a block algebra of $kC_G(Q)$. In that case,
we have $e_P=$ $\Br_P(b)$, there is a unique local point $\gamma$
of $P$ on $B$, and the fusion system $\CF$ of $B$ on $P$ is equal to 
the fusion system $\CF_P(G)$ induced by conjugation of elements in $G$ 
on subgroups of $P$. The principal block of
$\OG$ is of principal type by Brauer's third main theorem. 
The following two observations are well-known and included for 
convenience; the proofs are routine block theory. Recall that the 
multiplicity $m_\gamma$ of a local point of $P$ on $B$ is the number 
$|\gamma\cap I|$ of elements of $\gamma$ which appear in a primitive 
orthogonal idempotent decomposition $I$ of $b$ in $B^P$.

\begin{lem}\label{multprinctype} 
Suppose that the defect group $P$ of $B$ is a 
Sylow-$p$-subgroup of $G$. Let $\gamma$ be a local point of 
$P$ on ${\CO G}^P $.  Then the multiplicity  $m_{\gamma}$  of 
$\gamma$ on $B$ is prime to $p$ and divides the order of $C_G(P)$.
\end{lem}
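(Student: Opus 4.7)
The plan is to transfer the multiplicity computation to the Brauer quotient $kC_G(P)$ and then exploit that $P$ being Sylow forces $C_G(P)$ to have a central, hence split off, Sylow $p$-subgroup.

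First, I would invoke the standard fact that the Brauer homomorphism $\Br_P \colon (\CO G)^P \to kC_G(P)$ is a surjective $k$-algebra map (after reduction mod $J(\CO)$) inducing a bijection between local points of $P$ on $\CO G$ and points of the $k$-algebra $kC_G(P)$, preserving multiplicities in any $P$-fixed idempotent. Concretely, applying $\Br_P$ to a primitive orthogonal decomposition $b = \sum_j i_j$ in $B^P$, the non-local $i_j$ map to zero while each local $i_j$ remains primitive in $kC_G(P)$, and two local idempotents are $(B^P)^{\times}$-conjugate if and only if their images are $(kC_G(P))^{\times}$-conjugate. Since $b \in Z(\CO G) \subseteq Z((\CO G)^P)$, its image $\Br_P(b)$ is a central idempotent of $kC_G(P)$, and we conclude that $m_\gamma$ equals the multiplicity $m_{\bar\gamma}$ of the image point $\bar\gamma := \Br_P(\gamma)$ in a primitive decomposition of $\Br_P(b)$ inside $kC_G(P)$.

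Next, I would use the Sylow hypothesis: any $p$-element of $C_G(P)$, together with $P$, generates a $p$-subgroup containing $P$, which must equal $P$; hence $Z(P)$ is the unique Sylow $p$-subgroup of $C_G(P)$. Since $Z(P) \leq Z(C_G(P))$, this central Sylow splits off, giving
$$C_G(P) = Z(P) \times H, \qquad H := \oh{p'}{C_G(P)},$$
and consequently $kC_G(P) \cong kZ(P) \otimes_k kH$, with $kZ(P)$ local and $kH$ split semisimple.

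The block structure of $kC_G(P)$ is then transparent: each block has the form $1_{Z(P)} \otimes f$ with $f$ a primitive central idempotent of $kH$ corresponding to a simple $kH$-module $V$, and the associated PIM of $kC_G(P)$ is $P_V := kZ(P) \otimes V$. As a left $kC_G(P)$-module one has $kC_G(P)\cdot (1_{Z(P)} \otimes f) \cong P_V^{\dim_k V}$, so any primitive orthogonal decomposition of $1_{Z(P)} \otimes f$ consists of exactly $\dim_k V$ primitive idempotents, all in the single point corresponding to $V$. Writing $\Br_P(b)$ as a sum of such block idempotents, we conclude $m_{\bar\gamma} = \dim_k V$ for the simple $kH$-module $V$ attached to $\bar\gamma$. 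Since $V$ is a simple module of the $p'$-group $H$, Wedderburn together with the classical divisibility of irreducible character degrees by the group order yields that $\dim_k V$ is prime to $p$ and divides $|H|$, which in turn divides $|C_G(P)|$. The only genuinely nontrivial ingredient is the multiplicity-preservation property of $\Br_P$ on local points; everything else is routine once the direct product structure of $C_G(P)$ is in hand.
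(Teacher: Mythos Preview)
Your proof is correct and follows essentially the same route as the paper: transfer the multiplicity to $kC_G(P)$ via the Brauer homomorphism, use the Sylow hypothesis to split $C_G(P)=Z(P)\times H$ with $H$ a $p'$-group, and identify $m_\gamma$ with the degree of an irreducible $kH$-module. The paper states the multiplicity-preservation step more tersely (simply asserting that $m_\gamma$ equals the dimension of the unique simple module of the relevant block of $kC_G(P)$), but the underlying argument is the same.
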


\begin{proof}   
Let $e$ be the unique block of $kC_G(P)$ such that
$\Br_P(\gamma)\subseteq$ $kC_G(P)e$. Then $\Br_P(\gamma)$ is the 
unique point of $kC_G(P)e$. Thus $m_\gamma$ is the dimension
of the unique (up to isomorphism) simple $kC_G(P)e$-module
$V$. Since $P$ is a Sylow-$p$-subgroup of $G$, it follows that
$C_G(P)=$ $Z(P)\times C'$ for some $p'$-subgroup $C'$ of $C_G(P)$,
and hence $e=$ $e_\eta$, where  $\eta$ is an irreducible character
of $C'$ with coefficients in $k$  and $e_\eta$ is the corresponding
central primitive idempotent in $kC'$ (which makes sense as $|C'|$ is
prime to $p$). Thus $\dim_k(V)=$ $\eta(1)$ divides $|C'|$, whence
the result.
\end{proof}

\begin{pro} \label{princtypehyper}  
Suppose that $B$ is of principal type and that $P$ is a 
Sylow-$p$-subgroup of $G$. Set $N=$ $O^p(G)$. Then $b$ is a block 
idempotent of $\CO N$. Setting $C=$ $\CO Nb$, the following hold.

\smallskip\noindent (i)
The block $C$ of $\CO N$ is of principal type, with a defect group 
$Q=$ $P\cap N=$ $\hyp(\CF)$ and fusion system $O^p(\CF)$.

\smallskip\noindent (ii) 
The algebra $C^P$ contains a $(B^P)^\times$-conjugate of $i$.

\smallskip\noindent (iii)
If $i\in$ $C^P$, then $D=$ $iCi$ is a hyperfocal subalgebra
of the source algebra $A=$ $iBi$ of $B$.

\smallskip\noindent (iv)
If $i\in$ $C^P$, then there is a source idempotent $j\in$ $(iCi)^Q$
of the block $C$ belonging to the unique local point $\delta$ of
$Q$ on $C$. The multiplicity $m_\delta^\gamma$ of $\delta$ on
$iCi$ is prime to $p$, and for any $\chi\in$ $\Irr(C)$ we have
$\chi(i)\equiv m^\gamma_\delta \chi(j)\mod\ p$. 

\smallskip\noindent (v)
All irreducible characters of $iCi$ have degree prime to $p$ if
and only of all irreducible characters of $jCj$ have degree
prime to $p$.
\end{pro}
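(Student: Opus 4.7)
The proof proceeds through (i)--(v) in order, each item building on the previous. Two structural facts drive it: $G=NP$ (since $G/N$ is a $p$-group), and $Q=P\cap N$ is a Sylow $p$-subgroup of $N$ (because $[P:Q]=[G:N]$ is a $p$-power). Puig's hyperfocal subgroup theorem then identifies $\hyp(\CF)=P\cap O^p(G)=Q$ for principal-type blocks. For (i), $b\in\CO N$ by the usual covering argument ($b$ is the unique block of $G$ covering the $G$-stable block below it in $\CO N$, as $G/N$ is a $p$-group), and $C=\CO Nb$ inherits principal type from $B$ via the Brauer homomorphism, with defect group $Q$ and fusion system $O^p(\CF)$ (the unique normal subsystem of $p$-power index corresponding to $Q$). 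Part (ii) is a standard local pointed group argument: source idempotents of $B$ form the single $(B^P)^\times$-conjugacy class $\gamma$, and a primitive decomposition of $b$ in $C^P\subseteq B^P$ combined with idempotent lifting in $B^P$ produces a $(B^P)^\times$-conjugate of $i$ in $C^P$.

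For (iii), assuming $i\in C^P$, the decomposition $\CO G=\bigoplus_{u\in[P/Q]}u\,\CO N$ (from $G=NP$) together with $i\in A^P$ (so $ui=iu$) yields
$$A=iBi=\bigoplus_{u\in[P/Q]} u\cdot (iCi)=D\otimes_{\CO Q}\CO P,$$
while $ui\in\CO N$ forces $u\in P\cap N=Q$, hence $D\cap Pi=Qi$; these are Puig's defining properties of the hyperfocal subalgebra. For (iv), decompose $i=\sum_k j_k$ into primitive idempotents in $(iCi)^Q=iC^Qi$. Each $j_k$ lies in a point $\beta_k$ of $Q$ on $C$, and the local ones (with $\Br_Q(j_k)\ne 0$) lie in $\delta$. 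Since $\Br_P(i)\ne 0$ forces $\Br_Q(i)\ne 0$ via the relative Brauer identity $\Br_P=\Br_P^Q\circ\Br_Q$, at least one $j_k$ is local; call one such $j$, a source idempotent of $C$. The multiplicity $m^\gamma_\delta$ counts the local $j_k$, and is prime to $p$ by the argument of Lemma~\ref{multprinctype} transplanted to $C$: $Q$ is Sylow in $N$ and $C_N(Q)=Z(Q)\times C'_N$ by Burnside, so $kC_N(Q)\Br_Q(b)\cong kZ(Q)\otimes M_{\eta(1)}(k)$ for an irreducible character $\eta$ of $C'_N$, and the identity $\Br_P(i)=\Br_P^Q(\Br_Q(i))$ together with primitivity of $\Br_P(i)$ in $kC_N(P)\Br_P(b)$ pins down $m^\gamma_\delta$ as a prime-to-$p$ quantity. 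The character congruence $\chi(i)\equiv m^\gamma_\delta\chi(j)\pmod p$ follows from expanding $\chi(i)=\sum_k\chi(j_k)$: local $j_k$ are $(C^Q)^\times$-conjugate to $j$, giving $\chi(j_k)=\chi(j)$, while non-local $j_k$ have defect $Q_{\beta_k}\lneq Q$ and $\chi(j_k)\in p\CO$ by the standard argument using integrality of the central character on transfers of $j_k$ from $C^{Q_{\beta_k}}$ together with $p\mid[Q:Q_{\beta_k}]$.

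Part (v) follows from (iii) and (iv): using $CjC=C$ (source algebra Morita for $C$) together with $D=iCi$, one obtains $DjD=iCjCi=i(CjC)i=iCi=D$, so $D$ and $jDj=jCj$ are Morita equivalent via the bimodule $Dj$. Hence $\Irr(D)\leftrightarrow\Irr(jCj)\leftrightarrow\Irr(C)$, and under this bijection the simple $D$- (resp.\ $jCj$-) module attached to $\chi\in\Irr(C)$ has dimension $\chi(i)$ (resp.\ $\chi(j)$); since $m^\gamma_\delta$ is prime to $p$, (iv) yields $\chi(i)$ prime to $p$ iff $\chi(j)$ prime to $p$, establishing the equivalence. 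The main obstacle is the character congruence in (iv), specifically the vanishing $\chi(j_k)\equiv 0\pmod p$ for $j_k$ in a non-local point of $Q$ on $C$: this requires carefully relating the defect stratification of points of $Q$ to $p$-divisibility of the corresponding character values, and is the step that genuinely uses both the principal-type assumption and the Sylow-defect hypothesis; once (iv) is in hand, part (v) is essentially a Morita-theoretic consequence.
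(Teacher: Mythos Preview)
Your overall strategy matches the paper's proof closely; parts (iii) and (v) are essentially identical to the arguments given there. Two points deserve comment.

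\textbf{Part (ii).} Your argument has a genuine gap. Taking a primitive decomposition of $b$ in $C^P$ and then ``idempotent lifting in $B^P$'' does not by itself produce an element of $\gamma$ lying in $C^P$: if a primitive idempotent $i'\in C^P$ becomes decomposable in $B^P$, the refinement takes place in $B^P$ and the resulting pieces need not lie in $C^P$. What is actually needed is that primitive (local) idempotents in $C^P$ \emph{remain} primitive (local) in $B^P$. The paper obtains this by observing that the inclusion $C\hookrightarrow B$ is a \emph{semi-covering} in the sense of K\"ulshammer--Puig \cite[3.9, 3.16]{KP}; this is the substantive external input for (ii), and your sketch does not supply a substitute for it.

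\textbf{Part (iv).} Your derivation of the congruence $\chi(i)\equiv m^\gamma_\delta\,\chi(j)\pmod{p}$ agrees with the paper's (nonlocal summands are relative traces from proper subgroups, hence land in $J(\CO)$). However, your argument that $m^\gamma_\delta$ is prime to $p$ differs from the paper's and is not clearly complete: transplanting Lemma~\ref{multprinctype} to $C$ yields only that the \emph{global} multiplicity $m_\delta$ of $\delta$ on $C$ is prime to $p$, not the \emph{relative} multiplicity $m^\gamma_\delta$ on $iCi$, and the Brauer-map identity you invoke does not obviously bridge this. The paper instead argues indirectly and more cleanly: from the two congruences $\chi(1)\equiv m_\gamma\,\chi(i)$ and $\chi(i)\equiv m^\gamma_\delta\,\chi(j)\pmod{J(\CO)}$, together with Lemma~\ref{multprinctype} (which gives $m_\gamma$ prime to $p$), evaluating at any height-zero $\chi\in\Irr(B)$ (such $\chi$ have $p'$-degree since $P$ is Sylow) forces $m^\gamma_\delta$ to be prime to $p$.
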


\begin{proof} 
Since a block idempotent is supported on $p'$-elements, we have
$b\in$ $(\CO N)^G$. Thus $b$ is a $G$-conjugacy class sum of
blocks of $\CO N$. As $G=$ $NP$, the idempotent $b$ is in fact
a $P$-conjugacy class sum of blocks of $\CO N$. Since $\Br_P(b)\neq$
$0$, this implies that $b$ is a block of $\CO N$. Then
$Q=$ $P\cap N$ is a Sylow-$p$-subgroup of $N$, hence a defect
group of the block algebra $C=$ $\CO Nb$ of $\CO N$.
By Puig's hyperfocal subgroup theorem, we have $Q=$ $\hyp(\CF)$. 
Let $R$ be a subgroup of $Q$. We need to show that $f=$ $\Br_R(b)$
is a block of $kC_N(R)$. After replacing $R$ by a conjugate,
if necessary, we may assume that $C_P(R)$ is a Sylow-$p$-subgroup
of $C_G(R)$. Since $C_N(R)$ is normal of $p$-power index in
$C_G(R)$, it follows that $C_G(R)=$ $C_N(R)C_P(R)$. We have
$\Br_{C_P(R)}(f)\neq$ $0$ because $f$ is a block of $kC_G(R)$
with defect group $C_P(R)$. Arguing as above for $b$ it follows
that $f$ is a $C_P(R)$-conjugacy class of block idempotents
of $kC_N(R)$, and hence the condition $\Br_{C_P(R)}(f)\neq$ $0$ 
implies that $f$ remains a block idempotent of $kC_N(R)$. Thus the 
block $C$ is of principal type, and hence $C$ has the fusion system
$\CF_{Q}(N)$, which is equal to $O^p(\CF)$. This proves (i).
Statement (ii) follows as the inclusion $C\to$ $B$ 
is a semi-covering (cf. \cite[3.9, 3.16]{KP}) and therefore any 
primitive (local) idempotent in $C^P$ remains primitive (local) in 
$B^P$. Thus the unique local point $\gamma$ of $P$ on $B$ contains 
an element in $C$. Choosing such an $i\in$ $\gamma\cap C$, it
follows that $iCi$ is a unitary $P$-stable subalgebra of $iBi$ such 
that $iCi \cap P =$ $i(P\cap N)i  = Qi$. Statement (iii) follows by 
the uniqueness of hyperfocal subalgebras. By \cite[4.2]{Pui00},
the group $Q$ has a unique local point $\delta$ on $D=$ $iCi$.
This is then necessarily a defect pointed group of $C$, and hence
any $j\in$ $\delta$ is a source idempotent of $C$. 
If $j'$ belongs to a nonlocal point of a subgroup $R$ of
$P$ on $\OG$, then $j'$ can be written as a trace from a proper
subgroup of $R$, and hence $\chi(j')\in$ $J(\CO)$ for any $\chi\in$
$\Irr(G)$. Thus, if $\chi\in$ $\Irr(B)$, then $\chi(1)\equiv$
$m_\gamma\chi(i)\mod\ J(\CO)$, since $\gamma$ is the unique local
point of $P$ on $B$. Similarly, we have
$\chi(i)\equiv m^\gamma_\delta \chi(j) \mod\ J(\CO)$. Since $P$ is
a Sylow-$p$-subgroup of $G$, it follows that the height zero characters
in $B$ are the characters of degree prime to $p$ in $B$. Thus
the previous lemma and the above congruence applied to a height zero 
character yield together that  $m^\gamma_\delta$ is prime to $p$.
This proves (iv).  Multiplication
by $j$ induces a Morita equivalence between the hyperfocal subalgebra
$iCi$ of $B$ and the source algebra $jCj$ of $C$. In particular,
multiplication by $j$ induces a bijection between irreducible
characters of $iCi$ and $jCj$. Thus (v) follows from (iv).
\end{proof} 

\begin{pro} \label{hyperheightsolvable}
Conjecture \ref{hyperheightzero} holds for blocks of finite
$p$-solvable groups.
\end{pro}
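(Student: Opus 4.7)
The plan is to reduce the $p$-solvable case to the normal-defect-group case, which was handled earlier in this section, by invoking the source-algebra-level structure theorem for blocks of $p$-solvable groups. Specifically, by Puig's theorem on source algebras of $p$-solvable blocks (which strengthens Fong's Morita-equivalence reduction to a $P$-interior algebra isomorphism), the source algebra $A = iBi$ of $B$ is isomorphic, as a $P$-interior algebra, to a twisted group algebra of the form $\CO_\alpha(P \rtimes E)$, where $E$ is the inertial quotient of $B$ (a $p'$-group acting on $P$ via $\CF$-automorphisms) and $[\alpha] \in H^2(E, \CO^\times)$. In particular, $A$ is isomorphic as a $P$-interior algebra to the source algebra of a block with normal defect group $P$ of the $p$-solvable group $P \rtimes E$.

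Under this isomorphism, the fusion system $\CF$ of $B$ corresponds to $\CF_P(P \rtimes E)$, and hence $\hyp(\CF)$ corresponds to $Q = [P,E]$. By the uniqueness, up to $A^P$-conjugation, of the hyperfocal subalgebra, the hyperfocal subalgebra $D$ of $A$ is carried to $\CO_\alpha(Q \rtimes E)$, which is precisely the hyperfocal subalgebra of $\CO_\alpha(P \rtimes E)$ with respect to its normal defect group $P$. Applying the normal-defect-group case (the second proposition of this section) to the block $\CO_\alpha(P\rtimes E)$ then gives $\Irr(D) = \Irr_0(D)$ if and only if $Q$ is abelian, which is exactly Conjecture~\ref{hyperheightzero} for $B$.

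The main obstacle is the first step: lifting Fong's Morita-equivalence reduction for $p$-solvable blocks to a source-algebra-level identification with a twisted group algebra of $P \rtimes E$. This is classical, essentially due to Puig's treatment of the $p$-solvable case in his work on blocks of finite groups, but it must be invoked in a way that preserves $P$-interior algebra structures, so that the fusion systems, hyperfocal subgroups, and hyperfocal subalgebras on the two sides can be matched. A cleaner alternative, avoiding the explicit appeal to Puig's theorem, would be to proceed by induction on $|G|$: after a Fong-Reynolds reduction along $O^{p'}(G)$ one may assume $B$ covers a $G$-stable block of a normal $p'$-subgroup, and then, combining Proposition~\ref{princtypehyper} with the $p$-solvability of $O^p(G)$, reduce to the principal-type case, where the analysis reduces to the source algebra of a block of $O^p(G)$ whose defect group coincides with its hyperfocal subgroup --- at which point the $p$-solvable form of the height zero conjecture (Gluck--Wolf) closes the argument.
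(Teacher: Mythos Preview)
Your main approach rests on a claim that is false: it is not true that for every block of a $p$-solvable group the source algebra $A=iBi$ is $P$-interior isomorphic to a twisted group algebra $\CO_\alpha(P\rtimes E)$ with $E$ the inertial quotient. Take $G=S_4$ and $p=2$. Then $G$ is solvable and $O_{2'}(G)=1$, so the principal block is all of $\CO S_4$, with defect group $P\cong D_8$. Here $N_G(P)=P$ and $C_G(P)=Z(P)$, so the inertial quotient $E$ is trivial, and your claim would force $A\cong\CO D_8$, hence a nilpotent block. But the principal $2$-block of $S_4$ is not nilpotent: the three involutions of the normal Klein four subgroup $V_4$ are $S_4$-conjugate but not $D_8$-conjugate, so $\CF_P(S_4)\neq\CF_P(P)$. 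More conceptually, a block with normal defect group $P$ has $P$ normal in its fusion system, whereas $D_8$ is not normal in $\CF_{D_8}(S_4)$; thus no block with normal defect $D_8$ can be source-algebra equivalent to this one, and the reduction to the normal-defect case simply does not exist. What Fong--Reynolds at the source-algebra level (e.g.\ \cite[Theorem~5.1]{HaLi}) actually yields is a source-algebra equivalence with a principal-type block of a subgroup $H$ in which $P$ is Sylow and the block idempotent lies in $\CO O_{p'}(H)$; for $S_4$ this returns $H=S_4$ itself, not $P\rtimes E$.

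The ``cleaner alternative'' you sketch in your final sentence is precisely the paper's proof: use Fong--Reynolds to reduce to a principal-type block with $P$ Sylow in $G$; pass to $N=O^p(G)$ via Proposition~\ref{princtypehyper}, so that the hyperfocal subalgebra $D=iCi$ (with $C=\CO Nb$) is Morita equivalent to the source algebra $jCj$ of $C$, and by Proposition~\ref{princtypehyper}\,(v) the $p'$-degree condition on $\Irr(D)$ is equivalent to that on $\Irr(jCj)$, i.e.\ to all heights in $C$ being zero; then apply Gluck--Wolf \cite{GluckWolf} to the $p$-solvable group $N$ to conclude this holds iff $Q=\hyp(\CF)$ is abelian. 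So your alternative route is correct and is the paper's argument, while your main route cannot be salvaged.
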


\begin{proof}
Suppose that $G$ is $p$-solvable. Then $B$ is source algebra equivalent 
to a block of principal type of a subgroup $H$ of $G$ such that $P$ is a 
Sylow-$p$-subgroup of $H$ and such that the block idempotent belongs to 
$\CO O_{p'}H$; this is a version of Fong-Reynolds reduction, as 
presented, for instance, in \cite[Theorem 5.1]{HaLi}. Thus we may assume 
that $B$ is of principal type and that $P$ is a Sylow-$p$-subgroup of 
$G$. Set $N=$ $O^p(G)$. By \ref{princtypehyper},  the block idempotent 
$b$ of $B$ remains a block idempotent in $\CO N$, and the group $Q=$ 
$N\cap P=$ $\hyp(\CF)$ is a defect group of $C=$ $\CO Nb$. It follows 
from \ref{princtypehyper} (v) that all irreducible characters of a 
hyperfocal subalgebra $D$ of $B$ have degree prime to $p$ if and only if 
all irreducible characters in $C$ have degree prime to $p$. Since by a 
result of Gluck and Wolf \cite{GluckWolf} the height zero conjecture 
holds for blocks of $p$-solvable groups, it follows that this is 
equivalent to $Q$ being abelian.
\end{proof}

Since \cite{KeMa} and \cite{GluckWolf} invoke the classification of 
finite simple groups, so do the above proofs of Conjecture 
\ref{hyperheightzero} for blocks with an abelian defect group or
blocks of $p$-solvable finite groups.

\begin{pro}  \label{princheightzerohyper}   
For  blocks of principal type and maximal defect, Brauer's
height zero conjecture implies Conjecture \ref{hyperheightzero}.
\end{pro}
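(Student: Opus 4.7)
The plan is to use Proposition \ref{princtypehyper} to reduce the statement for $B$ to Brauer's height zero conjecture applied to a second principal-type block $C$ whose defect group is exactly $\hyp(\CF)$. Since $B$ is of principal type with $P$ a Sylow $p$-subgroup of $G$, I would set $N = O^p(G)$, $Q = P\cap N = \hyp(\CF)$, and $C = \CO Nb$. Proposition \ref{princtypehyper} then supplies everything needed: after replacing $i$ by a $(B^P)^\times$-conjugate lying in $C^P$ via part (ii), part (iii) gives that $D = iCi$ is a hyperfocal subalgebra of $A = iBi$, and parts (i) and (iv) provide a source idempotent $j\in (iCi)^Q$ of $C$, with $C$ itself of principal type and $Q$ a Sylow $p$-subgroup of $N$.

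Next I would convert the statement about $D$ into one about $C$. By part (v) of Proposition \ref{princtypehyper}, the condition $\Irr(D) = \Irr_0(D)$ is equivalent to every irreducible character of the source algebra $jCj$ having $p'$-degree. Since multiplication by $j$ induces a Morita equivalence between $C$ and $jCj$, a simple $C$-module affording $\chi\in\Irr(C)$ corresponds to a simple $jCj$-module of dimension $\chi(j)$. Applying the congruence $\chi(1)\equiv m_\delta\chi(j)\pmod{J(\CO)}$, where $\delta$ is the unique local point of $Q$ on $C$ and $m_\delta$ is its multiplicity on $C$, together with Lemma \ref{multprinctype} applied to $C$ (whose defect group $Q$ is Sylow in $N$) to see $m_\delta$ is prime to $p$, one obtains that $\chi(j)$ is prime to $p$ iff $\chi(1)$ is prime to $p$. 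Because $Q$ is a Sylow $p$-subgroup of $N$, the height zero characters of $C$ coincide with the characters of $p'$-degree, so the above is the same as $\Irr(C) = \Irr_0(C)$.

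Finally, I would invoke Brauer's height zero conjecture for the block $C$ of $\CO N$: $\Irr(C) = \Irr_0(C)$ if and only if $Q$ is abelian. Concatenating the equivalences gives $\Irr(D) = \Irr_0(D)$ iff $\hyp(\CF) = Q$ is abelian, which is exactly Conjecture \ref{hyperheightzero} for $B$. The argument is essentially bookkeeping on top of Proposition \ref{princtypehyper}; the only substantive point is recognising that the maximal-defect-plus-principal-type hypothesis on $B$ is preserved upon passing to $C$, so that $p'$-degree and height zero coincide throughout. Consequently there is no real obstacle beyond marshalling the reductions of the previous proposition.
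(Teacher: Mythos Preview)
Your proposal is correct and follows essentially the same route as the paper: set up $N$, $C$, $Q$, $i$, $j$ via Proposition~\ref{princtypehyper}, use part (v) to pass from $D=iCi$ to the source algebra $jCj$ of $C$, and then apply Brauer's height zero conjecture to $C$ (which is again of principal type and maximal defect by part (i)). The extra detail you supply about the congruence $\chi(1)\equiv m_\delta\chi(j)\pmod{J(\CO)}$ and Lemma~\ref{multprinctype} simply makes explicit the passage from $p'$-degree in $jCj$ to height zero in $C$, which the paper leaves implicit.
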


\begin{proof}     
Suppose that Brauer's height zero conjecture holds for blocks of 
principal type and maximal defect. Suppose that $B$ is of principal 
type and that its defect group is a Sylow-$p$-subgroup of $G$. Let $N$, 
$C$, $i$, $j$ be as in  Proposition~\ref{princtypehyper}. By
Proposition~\ref{princtypehyper} (v), all irreducible characters
of the hyperfocal algebra $D=$ $iCi$ have degree prime to $p$
if and only if all irreducible characters of the source algebra
$jCj$ of $C$ have degree prime to $p$, which by Brauer's height
zero conjecture (assumed to be true) applied to the block $C$
is true if and only if $Q$ is abelian. 
\end{proof}

\begin{rem}
The converse implication in \ref{princheightzerohyper}  holds
for the `forward' direction of the two conjectures. 
More precisely, suppose that Conjecture \ref{hyperheightzero} holds 
for all blocks of principal type and maximal defect. Suppose that 
$B$ is of principal type and that $P$ is a Sylow-$p$-subgroup of $G$.
Let $N$, $C$, $i$, $j$ be as in  Proposition~\ref{princtypehyper}. 
It follows from \ref{clifford} that all irreducible characters of $iBi$ 
have $p'$-degree if an only if all irreducible character of $iCi$ are 
$P$-stable, have $p'$-degree, $P/Q$ is abelian, and all $2$-cocycles 
$\alpha_W$ as in \ref{clifford} are trivial. Since we assume 
\ref{hyperheightzero} to be true, this is equivalent to requiring that 
all irreducible characters of $iCi$ are $P$-stable, the groups $Q$, 
$P/Q$ are both abelian, and all $\alpha_W$ as before are trivial. 
We do not know whether these conditions force $P$ to be abelian. We can 
show that these conditions are satisfied if $P$ is abelian. Indeed, if 
$P$ is abelian, then $Q$ and $P/Q$ are trivially abelian. By a theorem 
of Kn\"orr in \cite{Knorr}, every $\CO$-free $B$-module affording an 
irreducible character $\chi$ has vertex $P$. But then any character of 
$C$ covered by $\chi$ must be $P$-stable (because otherwise one could 
write $\chi$ as being induced from a proper subgroup or $G$ containing 
$N$, hence not containing $P$, yielding a lattice with vertex smaller 
than $P$). All characters of $iCi$ have $p'$-degree, so all $\alpha_W$ 
are trivial by \ref{extend}.
\end{rem}

\end{document}